\theoremstyle{plain}
\newtheorem{theorem}{Theorem}[section]
\newtheorem{lemma}[theorem]{Lemma}
\newtheorem{proposition}[theorem]{Proposition}
\theoremstyle{definition} 
\theoremstyle{remark} \newtheorem{remark}[theorem]{Remark}
\newtheorem{prop}[theorem]{Proposition}
 \def\cR{{\mathcal R}} 
 \def\cB{\mathcal B}
\newcommand{\cW}{\mathcal W} \newcommand{\cM}{\mathcal M}
  \def\cF{\mathcal F}
 \def\cA{\mathcal A}
\newcommand{\tr}{\mathrm{Tr}}
\newcommand{\nr}{\mathrm{N}}
\newcommand{\PG}{\mathrm{PG}}
\newcommand{\AG}{\mathrm{AG}}
 \newcommand{\car}{\text{char}}
\newcommand{\PGL}[2]{\mathrm{PGL}_{#1}(#2)}
\newcommand{\GL}[2]{\mathrm{GL}_{#1}(#2)}
\newcommand{\Aut}{\mathrm{Aut}}
\newcommand{\GF}[1]{\mathbb{F}_{#1}}
\begin{document}

\title[quasi-Hermitian varieties and applications in even
characteristic]{On quasi-Hermitian varieties in even characteristic
  and related orthogonal arrays}
\author[A. Aguglia]{Angela Aguglia}
\address{Dipartimento di Meccanica, Matematica e Management, Politecnico di Bari (IT)}
\email{angela.aguglia@poliba.it}
\author[L. Giuzzi]{Luca Giuzzi}
\address{DICATAM, Università degli Studi di Brescia (IT)}
\email{luca.giuzzi@unibs.it}
\author[A. Montinaro]{Alessandro Montinaro}
\address{Dipartimento di Matematica e Fisica "Ennio De Giorgi", Università del Salento (IT)}
\email{alessandro.montinaro@unisalento.it}
\author[V. Siconolfi]{Viola Siconolfi} \date{}
\address{Dipartimento di Meccanica, Matematica e Management, Politecnico di Bari (IT)}
\email{viola.siconolfi@poliba.it}


\begin{abstract}  
  In this paper we study the BM quasi-Hermitian varieties
  introduced in~\cite{ACK}, laying in the $3$-dimensional Desarguesian projective space of even order.
  After a brief investigation of their combinatorial properties, we
  first show that all of these varieties are projectively
  equivalent,
  exhibiting a behavior which is strikingly different
  from what happens in odd characteristic, see \cite{AL22}. This completes
  the classification project started there. Here we prove more; indeed, by using previous results, we explicitly
  determine the structure of
  the full collineation group stabilizing these varieties.
  Finally, as a byproduct of our investigation, we also construct
  a family of simple orthogonal arrays $O(q^5,q^4,q,2)$, with entries in  $\GF{q}$, where $q$ is an even prime power. Orthogonal arrays (OA's) are principally used to minimize the number of experiments
needed in order to investigate how variables in testing interact
with each other.
\end{abstract}

\keywords{Quasi-Hermitian variety; collineation group; projective classification; orthogonal array; even characteristic.}
\subjclass[2020]{11G25, 20H30,05B15}
\maketitle

\section{Introduction}
Unitals in a finite projective plane of order $q^2$ are sets of $q^3+1$
points which have the same intersection numbers as Hermitian
curves with respect to lines, i.e. they meet every line in either $1$
or $q+1$ points. Quasi-Hermitian varieties are a natural
generalization of unitals to higher dimensions; namely they are set of
points in the finite projective space $\PG(n,q^2)$ which have the same size and the same
intersection numbers as Hermitian varieties with respect to
hyperplanes.

Actually, a point set $S$ of $\PG(n, q^2)$, $n>2$, having the same intersection numbers
with respect to hyperplanes as a non-singular Hermitian variety  has also the
same number of points as it; for $n = 2$, the size of $S$ can be either $q^3 + 1$, that
is, the size of a Hermitian curve also called a classical unital, or $q^2 + q + 1$, which is
the number of points of a Baer subplane of $\PG(2, q^2)$; see ~\cite{AA}, ~\cite{SVdV}.

It is a classical problem in finite geometry  to characterize
point-sets in term of their incidence properties with respect to
subspaces.  For instance, the notion of arc in a plane is born by
abstracting the incidence properties of a conic in a Desarguesian plane
$\PG(2,q)$.  A celebrated theorem by Segre states that for $q$ odd all
$(q+1)$-arcs are complete and turn out to be indeed conics.  As
mentioned above, the case for Hermitian curves is different, as if
$q>2$ there exist also non-classical unitals in planes of order $q^2$  (i.e. they are  not  sets of points of a
Hermitian curve).

Indeed, important families of unitals were found  by Buekenhout~\cite{B76} in every two-dimensional (projective) translation plane; Metz~\cite{M79} showed how to use Buekenhout’s method to construct a  non-classical  unital in the Desarguesian plane $\PG(2, q^2)$ for any prime power
$q > 2$. The unitals of this family are called
Buekenhout-Metz
(BM) unitals. For a careful description of these unitals
see~\cites{BE,E92} whereas for a thorough survey of the research literature on embedded
 unitals see e.g.~\cite{EB}.

As in the case of unitals, several constructions are also known for
quasi-Hermitian varieties in higher projective dimensions; see e.g.~\cites{DS,A18,ACK,LLP}.  In
particular, in~\cite{ACK} a large family of quasi-Hermitian varieties
of $\PG(n,q^2)$, depending on two parameters in the finite field $\GF{q^2}$ of order $q^2$, has been
introduced. In dimension $n=2$ these varieties are BM--unitals and thus they will be called  BM quasi-Hermitian varieties.

In~\cite{AL22}, two of the authors studied the equivalence classes, up
to projectivities, of BM quasi-Hermitian varieties for $n=3$ and $q$ odd and  they enumerated these classes, using a technique similar to the one
 employed to determine the equivalence classes number  of the
BM--unitals in the plane.

In the present paper, we consider BM quasi-Hermitian varieties in
$\PG(3,q^2)$ with $q$ even, case which was left open in~\cite{AL22},
 completing
  the classification project started there and, more explicitly, determining
 the structure of
  the full collineation group that stabilizes  these varieties.

 Precisely, in Section~\ref{cones} we explicitly recall the construction of BM
quasi-Hermitian varieties of~\cite{ACK}. In Section~\ref{s:combin} we
determine some  geometric properties of the BM quasi-Hermitian varieties $\cM_{a,b}$ of
$\PG(3,q^2)$ for $q$ even; in particular we observe that it is
possible to choose a projective reference in such a way that through
each affine point of $\cM_{a,b}$ there is exactly one line contained
in the variety, and these lines are all parallel to a given plane. It is shown
in Section~\ref{s:eqiv} that in even characteristic all of the
varieties $\cM_{a,b}$ are projectively equivalent. This is in
marked contrast with the behavior for $q$ odd. Combining this result together with some geometric features of $\cM_{a,b}$ and specific properties of suitable subgroups of $P\Gamma L_{4}(q^{2})$, the stabilizer in $P\Gamma L_{4}(q^{2})$ of the quasi-Hermitian variety $\cM_{a,b}$ is determined in Section \ref{s:stab}. Further, its structure and its action on the
points of $\cM_{a,b}$ is discussed.

Our long-term
aim is to try to find a characterization of the BM quasi-Hermitian
varieties among all possible quasi-Hermitian varieties in spaces of
the same dimension and order.

Finally, in
Section~\ref{oarr} simple orthogonal arrays $OA(q^5,q^4,q,2)$ of index
$q^3$, $q$ even, are constructed from  the BM quasi-Hermitian varieties $\cM_{a,b}$.

Orthogonal arrays (OA's) are principally used to minimize the number of experiments
needed in order to investigate how variables in testing interact
with each other and, consequently, determine the required parameters.
For instance, OA's are used to calibrate the flight parameters of drones,
in order to optimize their performance; see e.g. \cite{UAV}.

\medskip

\section{Background on quasi-Hermitian varieties}
\label{cones}

Quasi-Hermitian varieties were introduced in~\cite{DS} as a
generalization of non-singular Hermitian varieties through the
following definition.

  A point-set $H$ in $\PG(n,q^2)$ is a \emph{quasi-Hermitian variety}
  if has the same size and the same intersection numbers with
  hyperplanes as a non-singular Hermitian variety $H(n,q^2)$ of
  $\PG(n,q^2)$.

In particular, a quasi-Hermitian variety is a set of size
$(q^{n+1}+(-1)^n)(q^{n}-(-1)^n)/(q^2-1)$ of $\PG(n,q^2)$ meeting the
hyperplanes in either
\[ (q^{n}+(-1)^{n-1})(q^{n-1}-(-1)^{n-1})/(q^2-1)\] or
\[ 1+q^2(q^{n-1}+(-1)^n)(q^{n-2}-(-1)^n)/(q^2-1) \] points;
see~\cite{S}.

\begin{remark}
    As pointed out in the introduction, it is possible to drop the
  requirement on the size of $H$ if $n\geq 3$.
\end{remark}

There are a few families known of quasi-Hermitian varieties; some of them
turn out to be a sort of higher-dimension analogue
to the known families of unitals or can be obtained from them
by pivoting; see~\cite{SVdV}. Here we point out that there are also families
for $n\geq 3$ which are quite different from those; see~\cite{LS}.

The quasi-Hermitian varieties we are considering in the present
paper are  BM quasi-Hermitian varieties as for $n=2$ they turn out to be
Buekenhout-Metz unitals; see e.g. \cite{EB}.
They are defined as follows.

Let $\cB_{a,b}$ be the surface of $\PG(3,q^2)$ of projective equation
\begin{multline}
  \label{eq:bab}
  \cB_{a,b}: Z^qJ^q-ZJ^{2q-1}+a^q(X^{2q}+Y^{2q})-a(X^2+Y^2)J^{2q-2}=\\
  (b^q-b)(X^{q+1}+Y^{q+1})J^{q-1},
\end{multline}
with $a\in\GF{q^2}^*$ and $b \in\GF{q^2}\setminus\GF{q}$.  Denote by
$\Sigma_{\infty}$ the hyperplane at
infinity with equation $J=0$ of $\PG(3,q^2)$, put
\begin{equation}
 \label{cono}
\mathcal{F}:=\{(0,X,Y,Z)\in \PG(3,q^2)| X^{q+1}+Y^{q+1}=0\}
\end{equation}
and
\[
  \cB_{\infty}:=(\cB_{a,b}\cap \Sigma_{\infty}).
\]

In \cite{ACK} it was proved that the following point set of $\PG(3,q^2)$

\begin{equation}\label{mab}
  \cM_{a,b}:=(\cB_{a,b}\setminus\cB_{\infty})\cup \mathcal{F}
\end{equation}
is a quasi-Hermitian variety for $q\geq 4$ even or for $q$ odd and $4a^{q+1}+(b^q-b)^2\neq 0$. We call it
a {\em BM quasi-Hermitian variety}.

Clearly, the affine points of $\cM_{a,b}$ satisfy the affine equation:
\begin{multline}
  \label{eq:mab}
  \cB_{a,b}: Z^q-Z+a^q(X^{2q}+Y^{2q})-a(X^2+Y^2)=
  (b^q-b)(X^{q+1}+Y^{q+1}).
\end{multline}
If $q=2$ then $b+b^q=1$, and hence $\cM_{a,b}$ is an Hermitian variety with equation 
\begin{multline}
  \label{eq:q=2}
  Z^qJ+ZJ^q+a^q(X+Y)J^q+a(X^q+Y^q)J+(X^{q+1}+Y^{q+1})=0.
\end{multline}
Therefore, in the sequel we assume that $q>2$.

Finally we recall that any
two BM quasi-Hermitian varieties  $\cM_{a,b}$  and $\cM_{\alpha,\beta}$ of $\PG(3,q^2)$ are \emph{projectively equivalent} if there exists a  collineation $\psi \in
P \Gamma L_4({q^2})$ such that $\psi(\cM_{a,b})=\cM_{\alpha,\beta}$.



\section{Preliminaries on BM quasi-Hermitian varieties in $\PG(3,q^2)$, $q$ even}
\label{s:combin}
Let $q>2$ be an even prime power, $a\in\GF{q^2}^*$ and $b \in\GF{q^2}\setminus\GF{q}$.
 In the present section we first
determine  the number of lines through a point of $\cB_{a,b}$ with equation \eqref{eq:bab} which are contained in $\cB_{a,b}$, next we deduce some related combinatorial properties of the BM quasi-Hermitian  variety $\cM_{a,b}$ (see \eqref{mab}) and some information about the stabilizer  of $\cM_{a,b}$ in the projective linear group $\PGL{4}{q^2}$.

\begin{theorem} \label{th32} Let $\cB_{a,b}$ be the surface of
  equation~\eqref{eq:bab} in $\PG(3,q^2)$, $q$ an even prime power and put $\cB_{\infty}=\cB_{a,b}\cap [J=0]$ and $P_{\infty}=(0,0,0,1)$.
  Then,
  \begin{itemize}
  \item[(i)] for any affine point $Q$ of $\cB_{a,b}$ there is exactly one
    line of $\PG(3,q^2)$ passing through $Q$ and contained in
    $\cB_{a,b}$;
  \item[(ii)] for any point $R$ in $\cB_{\infty}\setminus P_{\infty}$ there
    are $q+1$ lines contained in $\cB_{a,b}$ passing  through $R$ (and exactly
    one of these lines is contained in $\cB_{\infty}$);
  \item[(iii)] there is exactly one line, among the ones contained in
    $\cB_{a,b}$, that passes through $P_{\infty}$ and it consists of the points of
    $\cB_{\infty}$.
  \end{itemize}

\end{theorem}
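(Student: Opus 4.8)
The plan is to reduce each statement to the question of \emph{when a parametrized line, substituted into the affine equation~\eqref{eq:mab}, produces a univariate polynomial that vanishes identically}. Throughout I will exploit two features of even characteristic: the maps $u\mapsto u^{2^k}$ are additive, and the relative trace $\tau:=b^q-b=b^q+b$ lies in $\GF{q}^*$ (it equals $\tr_{\GF{q^2}/\GF{q}}(b)$, and it is nonzero precisely because $b\notin\GF{q}$). Rewriting~\eqref{eq:mab} with these conventions turns each substitution into a short $\GF{q}$-linear computation.

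For part (i), I take an affine point $Q=(x_0,y_0,z_0)$ of $\cB_{a,b}$ and the line through $Q$ of direction $(\alpha,\beta,\gamma)$, i.e. the points $(x_0+t\alpha,\,y_0+t\beta,\,z_0+t\gamma)$ with $t\in\GF{q^2}$ together with the point at infinity $(\alpha{:}\beta{:}\gamma{:}0)$. Substituting and using additivity of Frobenius, the resulting polynomial $g(t)$ has nonzero coefficients only in degrees $0,1,2,q,q+1,2q$, and its constant term vanishes because $Q$ lies on $\cB_{a,b}$. The two top-degree coefficients reduce, via $u^2+v^2=(u+v)^2$ and its $2q$-analogue, to $a(\alpha+\beta)^2=0$ and $a^q(\alpha+\beta)^{2q}=0$, each forcing $\alpha=\beta$; the degree-$(q+1)$ coefficient then vanishes automatically. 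What remains are the degree-$1$ and degree-$q$ coefficients, and here lies the crux: the degree-$q$ relation is exactly the $q$-th power (Frobenius image) of the degree-$1$ relation, so the two are equivalent and together pin down $\gamma$ uniquely. One finds the single admissible direction $(1{:}1{:}\tau(x_0+y_0)^q)$, proving there is at most one line; since its point at infinity satisfies $X=Y$ it lies on $\cB_\infty\subset\cB_{a,b}$, so the whole projective line is contained and (i) follows.

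Parts (ii) and (iii) rest on first identifying $\cB_\infty$. Setting $J=0$ in~\eqref{eq:bab} leaves $a^q(X^{2q}+Y^{2q})=a^q(X+Y)^{2q}=0$, so $\cB_\infty$ is the line $X=Y$ of $\Sigma_{\infty}$, a line through $P_\infty=(0{:}0{:}1{:}0)$. For (ii) I write $R=(1{:}1{:}r{:}0)$ and observe that any surface line through $R$ not lying in $\Sigma_{\infty}$ meets the affine part in a line of direction $(1,1,r)$; such affine lines can be written $y=x+c$, $z=rx+e$, and substituting into~\eqref{eq:mab} yields coefficients only in degrees $0,1,q$ of $x$. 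The degree-$1$ and degree-$q$ coefficients again coincide up to Frobenius and force $c=r^q/\tau$, while the constant term becomes a trace equation $\tr_{\GF{q^2}/\GF{q}}(e)=K$ whose right-hand side one checks to lie in $\GF{q}$, hence has exactly $q$ solutions $e$. These give $q$ distinct lines through $R$; adjoining the unique line of $\Sigma_{\infty}$ contained in the surface, namely $\cB_\infty$ itself, gives the count $q+1$. For (iii) the lines through $P_\infty$ are either $\cB_\infty$ or vertical affine lines (direction $(0,0,1)$); substituting a vertical line leaves $z^q+z=\tr_{\GF{q^2}/\GF{q}}(z)$, which is not constant, so no vertical line lies on the surface and $\cB_\infty$ is the unique line.

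The main obstacle I anticipate is organizational rather than deep: one must collect the coefficients in the correct degrees and verify the recurring ``Frobenius-conjugate'' coincidence between the degree-$1$ and degree-$q$ relations, since it is exactly this coincidence that converts an apparently overdetermined pair of conditions into a single one and thereby yields a \emph{unique} (rather than empty or multiple) solution. A secondary point needing care is checking that the constant $K$ in part (ii) lies in $\GF{q}$, so that the trace equation is solvable with the correct multiplicity $q$; and the degenerate case $q=2$, where the exponents $2$ and $q$ collide and the coefficient bookkeeping must be redone, although the conclusions are unchanged.
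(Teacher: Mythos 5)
Your route is genuinely different from the paper's, and for $q>2$ it is correct. The paper never computes at a general affine point: it invokes the transitivity of the collineation group of $\cB_{a,b}$ on affine points (Corollary 4.3 of \cite{ACK}) to reduce part (i) to the single point $(0,0,0)$, and it restricts the candidate directions \emph{a priori} to $(0,0,1)$, $(m,m,1)$, $(1,1,0)$ by observing that a contained projective line must meet $\cB_{\infty}$; only then does it substitute. You instead run the coefficient analysis at an arbitrary point, extracting $\alpha=\beta$ from the degree-$2$ and degree-$2q$ coefficients and $\gamma$ from the Frobenius-conjugate degree-$1$/degree-$q$ pair. Your computations check out: the direction $(1{:}1{:}\tau(x_0+y_0)^q)$ with $\tau=b^q+b$ specializes at the origin to the paper's $(1,1,0)$, its infinity point reproduces the paper's relation $(\bar{x}+\bar{y})^q=1/(m(b^q+b))$ through $L^m_{\infty}$ and Remark~\ref{rmab}, your uniform parametrization $R=(1{:}1{:}r{:}0)$ merges the paper's two separate cases $M_{\infty}$ and $L^m_{\infty}$ in part (ii), and the constant term $K=\tr(ac^2)+\tau c^{q+1}$ does lie in $\GF{q}$. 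Your approach buys independence from the group-theoretic input of \cite{ACK} and explicit formulas; the paper's buys shorter computations and, crucially, immunity to the degree problem described next.

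There is, however, a concrete flaw in your closing remark that for $q=2$ the bookkeeping "must be redone, although the conclusions are unchanged." If one only redoes the coefficient extraction, the conclusion of part (i) is \emph{false at the affine level}. The substituted polynomial has degree $2q$, and for $q=2$ this equals $q^2$, so vanishing as a function on $\GF{4}$ no longer forces the coefficients to vanish: one must fold $t^4\equiv t$, after which the degree-$2q$ term merges into the degree-$1$ coefficient and the degree-$(q+1)$ coefficient yields only $\alpha^{3}=\beta^{3}$, i.e.\ $\alpha,\beta$ both nonzero. Indeed, taking $q=2$, $a=1$, $b=\omega$ with $\omega^2=\omega+1$ (so $\tau=1$), the affine line $\{(t,\omega t,\omega^2 t):t\in\GF{4}\}$ through the origin satisfies \eqref{eq:mab} at every affine point, even though its direction has $\alpha\neq\beta$; through each affine point there are three such affine lines. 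The theorem survives only because it counts \emph{projective} lines contained in $\cB_{a,b}$: the closures of the spurious lines miss $\cB_{\infty}$, which is possible precisely because a line can meet a degree-$4$ surface in $4=q^2$ points without being contained. The repair is already implicit in your own text: impose from the outset, as the paper does, that the point at infinity $(\alpha{:}\beta{:}\gamma{:}0)$ of a contained line lies on $\cB_{\infty}$, forcing $\alpha=\beta$ with no coefficient extraction at all; the remaining conditions then involve only degrees $0,1,q<q^2$, and your argument goes through verbatim for every even $q$. Parts (ii) and (iii) are unaffected, since there the substituted polynomials already have degree at most $q$.
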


\begin{proof}
  Observe that for $q$ even
  \[
    \cB_{\infty}:
    \begin{cases}
      J=0 \\
      (X+Y)^{2q}=0.
    \end{cases}
  \]
  This is the line in $\Sigma_{\infty}:=[J=0]$ of equations $X+Y=0=J$.
  To stress this fact we shall call it $\ell_{\infty}$. We refer to
  the points in $\ell_{\infty}$ as $M_{\infty}=(0,1,1,0)$,
  $P_{\infty}=(0,0,0,1)$ and $L^m_{\infty}=(0,m,m,1)$ with
  $m\in\GF{q^2}^*$.

Any line $\ell\in \cB_{a.b}$ which is not contained in $\Sigma_{\infty}$,  	
  must contain one of the points in $\ell_{\infty}=\cB_{\infty}$.
	
  Take $P\in \cB_{a,b}\cap\AG(3,q^2)$, it is known~\cite{ACK} that the
  collineation group of $\cB_{a,b}$ acts transitively on the points of
  $\cB_{a,b}\cap\AG(3,q^2)$, thus we can assume without loss of
  generality $O=(0,0,0)$ and that $\ell$ has the following affine
  parametric equations:
  \[
    \begin{cases}
      x=m_1t \\
      y=m_2t \\
      z=m_3t
    \end{cases}
  \]
  for $t\in\GF{q^2}$ and
  $(m_1,m_2,m_3)\in \{(0,0,1),(m,m,1),(1,1,0)\}$. One can easily
  notice that:
  \begin{itemize}
  \item $(m_1,m_2,m_3)\neq (0,0,1)$, for otherwise the line would not be
    contained in $\cB_{a,b}$;
  \item $(m_1,m_2,m_3)\neq (m,m,1)$ because $\car(\mathbb{K})=2$ and
    again $\ell$ would not be contained in $\cB_{a,b}$.
  \end{itemize}
	
  So, we conclude that the only possible line contained in $\cB_{a,b}$
  and passing through $P_{\infty}$ has affine representation
  \[
    \begin{cases}
      x=t \\
      y=t \\
      z=0,
    \end{cases}
    t\in\GF{q^2}.
  \]
  Inspection of equation~\eqref{eq:bab} shows that this line is
  actually contained in $\cB_{a,b}$.  Using now the transitivity of
  the collineation group on the affine points of $\cB_{a,b}$ we obtain
  that for any point in $\cB_{a,b}\cap\AG(3,q^2)$ passes one and only
  one line contained in $\cB_{a,b}$.
	
  Now we turn our attention at the points of $\ell_{\infty}=\cB_{\infty}$, in
  particular we count the lines in $\cB_{a,b}$ that contain
  $L^{m}_{\infty}=(0,m,m,1)$ and are not $\ell_{\infty}$. The general
  line $r$ with this property has affine parametric equations
  \[
    r:\begin{cases}
        x=\bar{x}+mt \\
        y=\bar{y}+mt \\
        z=\bar{z}+t,
      \end{cases}
    \]
    where $(\bar{x},\bar{y},\bar{z})\in \cB_{a,b}\cap\AG(3,q^2)$,
    which means that:
    \begin{equation}\label{eq:x_in_bab}
      \bar{z}^q+\bar{z}+a^q(\bar{x}^{2q}+\bar{y}^{2q})+a(\bar{x}^2+\bar{y}^2)=(b^q+b)(\bar{x}^{q+1}+\bar{y}^{q+1}).
    \end{equation}
    We now write the condition for the whole line $r$ to be contained
    in $\cB_{a,b}$:
    \begin{align*}
      & \bar{z}^q+t^q+\bar{z}+t+a^q(\bar{x}^{2q}+\bar{y}^{2q}+\underbrace{(mt)^{2q}+(mt)^{2q}}_{=0})
        +a(\bar{x}+\bar{y})^2+\underbrace{(mt)^{2}+(mt)^{2}}_{=0})=                                       \\
      = & (b^q-b)[(\bar{x}^q+m^qt^q)(\bar{x}+mt)+(\bar{y}^q+m^qt^q)(\bar{y}+mt)].
    \end{align*}
    Simplifying~\eqref{eq:x_in_bab} we obtain
    \begin{align*}
      t^q[(m^q(b^q+b)(\bar{x}+\bar{y}))+1]+t[m(b^q+b)(\bar{x}+\bar{y})^q+1]=0.
    \end{align*}
    In order to have the latter equation satisfied for any
    $t\in\GF{q^2}$, we must have
    \begin{equation}
      (\bar{x}+\bar{y})^q=\frac{1}{m(b^q+b)} \text{ equivalently, }\; (\bar{x}+\bar{y})=\frac{1}{m^q(b^q+b)}.
    \end{equation}
    Given any $m$, there are $q^{2}$ possible pairs
    $(\bar{x},\bar{y})$ that satisfy
    $(\bar{x}+\bar{y})^q=\frac{1}{m(b^q+b)}$. For any such pair
    $(\bar{x},\bar{y})$, there are $q$ possible values of $\bar{z}$
    that satisfy $(\bar{x},\bar{y},\bar{z})\in \cB_{a,b}$. We deduce
    that the number of lines passing through $L^{m}_{\infty}$
    contained in $\cB_{a,b}$ is $\frac{q^2q}{q^2}+1=q+1$.
	
    We can repeat the same argument for $M_{\infty}=(0,1,1,0)$ and
    count the lines in $\cB_{a,b}$ through $M_{\infty}$. Consider the
    general affine line $r$ such that $M_{\infty}\in r$ and
    $r\neq\ell_{\infty}$:
    \[
      r:\begin{cases}
          x=\bar{x}+t \\
          y=\bar{y}+t \\
          z=\bar{z},
        \end{cases}
      \]
      with $(\bar{x},\bar{y},\bar{z})\in
      \cB_{a,b}\cap\AG(3,q^2)$. Reasoning as for $L^{m}_{\infty}$ we
      obtain
      \[
        t^q[((b^q+b)(\bar{x}+\bar{y}))]+t[(b^q+b)(\bar{x}+\bar{y})^q]=0.
      \]
      This equality is satisfied for every $t\in\GF{q^2}$ if and only
      if $\bar{x}=\bar{y}$. Notice that for every $\bar{x}=\bar{y}$
      there are $q$ possible $\bar{z}$ such that
      $(\bar{x},\bar{y},\bar{z})\in \cB_{a,b}\cap \AG(3,q^2)$. So, we
      obtain $q$ possible lines passing through $M_{\infty}$ with
      $r\neq\ell_{\infty}$.
	
     The general line passing
      through $P_{\infty}$ and not entirely contained in
      $\Sigma_{\infty}$ has affine equation
      \[
        r:\begin{cases}
            x=\bar{x} \\
            y=\bar{y} \\
            z=\bar{z}+t.
          \end{cases}
	\]
	We require that
        $(\bar{x},\bar{y},\bar{z})\in \cB_{a,b}\cap\AG(3,q^2)$ and
        $r\subset \cB_{a,b}$. This implies $t^q+t=0$ for any
        $t\in\GF{q^2}$, which is not true. We conclude that the only
        line contained in $\cB_{a,b}$ passing through $P_{\infty}$ is
        $\ell_{\infty}$.
      \end{proof}

\begin{remark}\label{rmab}
  Observe that for every point $L^{m}_{\infty}$ or $M_{\infty}$ the $q$
  affine lines in $\cB_{a,b}$ containing it are coplanar. In
  particular, the general affine line of $\cB_{a,b}$ through $M_{\infty}$ is
  contained in the plane of equation $x+y=0$, while the general affine
  line of $\cB_{a,b}$ through $L^m_{\infty}$ is contained in the affine plane
  $x+y=\frac{1}{m^q(b^q+b)}$.
  Furthermore $\ell_{\infty}\subset \mathcal{F}$, where $\mathcal{F}$ is the Hermitian cone defined in \eqref{cono}.
\end{remark}

\bigskip
\begin{theorem} \label{prop:lines_bab}

  Let $\cM_{a,b}$ be the BM quasi-Hermitian variety of $\PG(3,q^2)$, $q$ even,  described by~\eqref{mab}.
  Then through each affine point of $\cM_{a,b}$ there passes one line of $\cM_{a,b}$, whereas through a point at infinity of $\cM_{a,b}\cap \ell_{\infty}$ there pass $q+1$ lines of a pencil contained in $\cM_{a,b}$;
\end{theorem}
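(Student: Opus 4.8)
The plan is to transfer the line count for the surface $\cB_{a,b}$ obtained in Theorem~\ref{th32} to the variety $\cM_{a,b}$. Two observations make this possible. First, $\cM_{a,b}$ and $\cB_{a,b}$ have the same affine points, since $\cM_{a,b}\cap\AG(3,q^2)=\cB_{a,b}\cap\AG(3,q^2)$ by~\eqref{mab}, and moreover $\cB_\infty\subseteq\cF$, so the point at infinity of any line of $\cB_{a,b}$ not lying in $\Sigma_\infty$ belongs to $\cB_\infty\subseteq\cF\subseteq\cM_{a,b}$. Second, $\cB_{a,b}$ is a surface of degree $2q$, hence any line meeting it in more than $2q$ points is contained in it. Consequently, if $\ell$ is a line of $\cM_{a,b}$ not contained in $\Sigma_\infty$, its $q^2$ affine points lie in $\cB_{a,b}$, so $\ell\subseteq\cB_{a,b}$ (the inequality $q^2>2q$ gives this for $q>2$, and the case $q=2$ is settled by direct inspection). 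In other words, the lines of $\cM_{a,b}$ off $\Sigma_\infty$ are exactly the lines of $\cB_{a,b}$ off $\Sigma_\infty$.

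Granting this, the affine case is immediate. For an affine point $Q$, Theorem~\ref{th32}(i) yields a unique line $\ell\subseteq\cB_{a,b}$ through $Q$; its point at infinity lies on $\cB_\infty\subseteq\cF$, so $\ell\subseteq\cM_{a,b}$, and by the reduction any line of $\cM_{a,b}$ through $Q$ coincides with $\ell$. Hence exactly one line of $\cM_{a,b}$ passes through $Q$.

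For the points at infinity, which fill the locus $\cF$, I would first describe $\cF$ geometrically. As its equation $X^{q+1}+Y^{q+1}=0$ does not involve $Z$ and is equivalent to $(X/Y)^{q+1}=1$ in even characteristic, $\cF$ is the cone with vertex $P_\infty$ over the $q+1$ base points satisfying $(X/Y)^{q+1}=1$; that is, $\cF$ is the union of $q+1$ lines of $\Sigma_\infty$ through $P_\infty$, one of which is $\ell_\infty=\cB_\infty$. In particular, a line of $\cM_{a,b}$ contained in $\Sigma_\infty$ must lie in $\cF$, hence is one of these cone lines, and through a point of $\cF$ other than $P_\infty$ passes exactly one of them. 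I then examine the points of $\cB_\infty=\ell_\infty$, where the pencil structure occurs. Through $P_\infty$ the only line of $\cB_{a,b}$ is $\ell_\infty$ by Theorem~\ref{th32}(iii), so the lines of $\cM_{a,b}$ through $P_\infty$ are precisely the $q+1$ cone lines of $\cF$, a pencil of $\Sigma_\infty$. Through $M_\infty$, or through $L^m_\infty$, Theorem~\ref{th32}(ii) provides $q+1$ lines of $\cB_{a,b}$, namely $q$ affine lines together with $\ell_\infty$; all of them lie in $\cM_{a,b}$, and there is no further line through the point, since the only cone line through a non-vertex point is $\ell_\infty$ and every line off $\Sigma_\infty$ is accounted for by the reduction.

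The step I expect to be the real obstacle is verifying that these $q+1$ lines form a genuine pencil, i.e. that they are coplanar. For this I would invoke Remark~\ref{rmab}: the $q$ affine lines through $M_\infty$ (respectively $L^m_\infty$) all lie in the plane $x+y=0$ (respectively $x+y=\tfrac{1}{m^q(b^q+b)}$), whose projective closure $X+Y=cJ$ meets $\Sigma_\infty$ exactly in $X+Y=0=J$, that is in $\ell_\infty$. Hence $\ell_\infty$ and the $q$ affine lines all lie in this single plane and share the common point, so together they constitute a pencil. The delicate points are thus the coplanarity supplied by Remark~\ref{rmab} and its compatibility with $\ell_\infty$ at infinity; once these are in hand, the count $q+1$ and the pencil structure follow for each point of $\cB_\infty$.
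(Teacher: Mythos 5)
Your proposal takes the same route as the paper, whose entire proof consists of the observations that $\cM_{a,b}$ and $\cB_{a,b}$ share their affine points and that $\cB_{\infty}\subseteq\cF$, followed by a citation of Theorem~\ref{th32} and Remark~\ref{rmab}. For $q>2$ your write-up is correct and in fact more complete than the paper's: you make explicit the bridge the paper leaves tacit, namely that a line of $\cM_{a,b}$ not in $\Sigma_{\infty}$ has $q^{2}>2q=\deg\cB_{a,b}$ points on $\cB_{a,b}$ and so is contained in $\cB_{a,b}$, and you verify the pencil structure by checking that the projective closure $X+Y=cJ$ of the planes of Remark~\ref{rmab} meets $\Sigma_{\infty}$ exactly in $\ell_{\infty}$. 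Your handling of the points at infinity also quietly corrects the statement: since the only lines of $\cM_{a,b}$ inside $\Sigma_{\infty}$ are the $q+1$ generators of the cone $\cF$, and every line of $\cB_{a,b}$ meets $\Sigma_{\infty}$ on $\ell_{\infty}$, through a point of $\cF\setminus\ell_{\infty}$ there passes exactly \emph{one} line of $\cM_{a,b}$; the count $q+1$ holds precisely for the points of $\ell_{\infty}$, which is how the paper itself later uses the result (in Lemma~\ref{autaut0} the points of $\ell_{\infty}$ are said to be the only points of $\cM_{a,b}$ on more than one line).

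The one false step is the parenthetical claim that ``the case $q=2$ is settled by direct inspection.'' For $q=2$ one has $q^{2}=2q$, so the degree bound gives nothing, and direct inspection in fact refutes your reduction: writing a candidate line as $(\bar x+m_{1}t,\bar y+m_{2}t,\bar z+m_{3}t)$ and reducing the affine equation of $\cB_{a,b}$ modulo $t^{4}+t$, the containment conditions become $(b^{q}+b)(m_{1}^{q+1}+m_{2}^{q+1})=0$ together with $m_{3}=a^{2}(m_{1}^{4}+m_{2}^{4})+(b^{2}+b)(\bar x^{2}m_{1}+\bar y^{2}m_{2})$, the remaining ($t^{2}$) condition being the square of the latter. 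Hence for each of the $q+1=3$ direction classes $(1,m)$ with $m^{3}=1$ and each affine point of $\cM_{a,b}$ there is a line of $\cM_{a,b}$ through that point in that direction; for $m\neq 1$ its point at infinity lies in $\cF\setminus\ell_{\infty}$, so the line lies in $\cM_{a,b}$ but not in $\cB_{a,b}$. Thus for $q=2$ both your reduction and part one of the statement itself fail (through every affine point pass $3$ lines, as on a nonsingular Hermitian surface), and the case must be excluded rather than ``settled.'' This defect is inherited from the paper, whose two-line proof silently assumes that every line of $\cM_{a,b}$ meeting $\AG(3,q^{2})$ is a line of $\cB_{a,b}$ --- exactly the step that needs $q>2$ --- but you should not announce an inspection that would actually come out the other way.
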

\begin{proof}
We observe that the affine points of $\cM_{a,b}$ are the same as those of $\cB_{a,b}$, whereas the set $\cF$ of points at infinity of $\cM_{a,b}$ consists of the points $P=(0,x,y,z)$ such that  $x^{q+1}+y^{q+1}=0$ and it contains the points at infinity of $\cB_{\infty}=\ell_{\infty}$.
Hence, from Theorem \ref{th32}  and Remark \ref{rmab} we get the result.
\end{proof}

Now, denote by $G$
the stabilizer of $\cM_{a,b}$ in the projective linear group
$\PGL{4}{q^2}$.

\begin{lemma}
  \label{autaut0}
  The group $G$ stabilizes the affine points of $\cM_{a,b}$, fixes the
  point $P_{\infty}$ and preserves both the line $\ell_{\infty}$ and the
  hyperplane $\Sigma_{\infty}$.
\end{lemma}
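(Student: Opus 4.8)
The plan is to recover each distinguished object named in the lemma as an intrinsic projective invariant of the point set $\cM_{a,b}$ itself, so that every $g\in G$ is forced to preserve it. The basic tool is that a projectivity maps lines contained in $\cM_{a,b}$ to lines contained in $\cM_{a,b}$, and hence preserves, for each $P\in\cM_{a,b}$, the number of lines of $\cM_{a,b}$ through $P$. By Theorem~\ref{prop:lines_bab} this number equals $1$ for every affine point and $q+1\geq 3$ for every point of $\cF$; since these two values differ, $G$ must send affine points to affine points and points at infinity to points at infinity. This already gives the first assertion, that $G$ stabilises the set of affine points of $\cM_{a,b}$, and simultaneously that $G$ stabilises $\cF$, the set of points of $\cM_{a,b}$ lying on $\Sigma_{\infty}$.

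Next I would read off $\Sigma_{\infty}$ and $P_{\infty}$ from the internal geometry of $\cF$. Since the equation $X^{q+1}+Y^{q+1}=0$ does not involve $Z$, the set $\cF$ is the cone with vertex $P_{\infty}=(0,0,0,1)$ over the $q+1$ points $(0:x:y:0)$ with $x^{q+1}+y^{q+1}=0$; concretely it is the union of $q+1$ distinct lines through $P_{\infty}$, and one checks that these $q+1$ generators are exactly the lines entirely contained in $\cF$, because a line of $\Sigma_{\infty}$ missing $P_{\infty}$ meets each generator in a single point and so meets $\cF$ in at most $q+1<q^2+1$ points. Two distinct generators span $\Sigma_{\infty}$, whence $\Sigma_{\infty}=\langle\cF\rangle$; as $G$ stabilises $\cF$ and projectivities preserve spans, $G$ preserves $\Sigma_{\infty}$. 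Moreover any $g\in G$ permutes the generators of $\cF$ (being the lines contained in $\cF$), and therefore fixes their unique common point, the vertex $P_{\infty}$.

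It remains to single out $\ell_{\infty}=\cB_{\infty}$ among the $q+1$ generators of $\cF$. The key structural fact, established in the proof of Theorem~\ref{th32} and Remark~\ref{rmab}, is that every line of $\cM_{a,b}$ meeting the affine space has its point at infinity on $\ell_{\infty}$: those infinite points are precisely $M_{\infty}$ and the points $L^{m}_{\infty}$, all of which lie on $\ell_{\infty}$. Let $S$ denote the set of points of $\cF$ that lie on some line of $\cM_{a,b}$ containing an affine point; then $M_{\infty}\in S$, each $L^{m}_{\infty}\in S$, and $P_{\infty}\notin S$, so $S=\ell_{\infty}\setminus\{P_{\infty}\}$ and hence $\langle S\rangle=\ell_{\infty}$. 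Since $G$ preserves $\cM_{a,b}$, its lines, and the affine/infinite partition, it preserves $S$, and therefore $\ell_{\infty}=\langle S\rangle$. (Consistently, $P_{\infty}$ is recovered once more as the unique point of $\ell_{\infty}$ lying on no affine line of $\cM_{a,b}$.)

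The main obstacle is this final step: the other three claims fall out quickly from counting lines through a point and taking spans, but distinguishing the specific generator $\ell_{\infty}$ from the remaining $q$ generators of the cone $\cF$ requires the finer incidence information that all affine lines of $\cM_{a,b}$ accumulate their points at infinity exactly on $\ell_{\infty}$. The delicate verifications are that $S$ really fills up $\ell_{\infty}$ apart from the vertex, and that no affine line of $\cM_{a,b}$ reaches a point at infinity off $\ell_{\infty}$; both are guaranteed by Theorem~\ref{th32}, but they constitute the crux of the argument.
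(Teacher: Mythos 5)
Your first step is where the argument breaks. You take Theorem~\ref{prop:lines_bab} at face value and conclude that \emph{every} point of $\cF$ lies on $q+1\geq 3$ lines of $\cM_{a,b}$, so that the line-count $1$ versus $q+1$ separates affine points from points at infinity and forces $G$ to stabilise $\cF$. But that count is wrong for the points of $\cF\setminus\ell_{\infty}$ other than the vertex: any line with $q^2$ affine points contained in $\cM_{a,b}$ must have direction $(m_1,m_2,m_3)$ with $m_1=m_2$ (the coefficient of $t^{2q}$ in the affine equation~\eqref{eq:mab} is $a^q(m_1+m_2)^{2q}$), so its point at infinity lies on $\ell_{\infty}$ --- a fact you yourself invoke in your third paragraph --- while the only lines inside $\cF$ are the $q+1$ generators, as you prove in your second paragraph. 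Hence a point of $\cF\setminus\ell_{\infty}$ off the vertex lies on exactly \emph{one} line of $\cM_{a,b}$, namely its generator: your paragraphs~2--3 contradict your paragraph~1. (The statement of Theorem~\ref{prop:lines_bab} is overstated in the paper; what Theorem~\ref{th32} and Remark~\ref{rmab} actually yield, and what the paper's own proof of Lemma~\ref{autaut0} uses, is that the points of $\ell_{\infty}$ are the \emph{only} points of $\cM_{a,b}$ on more than one line.) Consequently the counting dichotomy isolates only $\ell_{\infty}$, not the affine/infinite partition; ``$G$ stabilises $\cF$'' is not established, and with it your derivations of $\Sigma_{\infty}=\langle\cF\rangle$, of the fixing of the vertex $P_{\infty}$, and of the $G$-invariance of your set $S$ all lose their footing.

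The gap is not cosmetic, because the line-incidence structure of $\cM_{a,b}$ is too self-similar for pure counting to finish the job: the lines of $\cM_{a,b}$ other than $\ell_{\infty}$ split into $q^2+1$ pencils of $q$ lines, one concurrent at each point $R$ of $\ell_{\infty}$ ($q$ affine lines in the plane $X+Y=cJ$ when $R\neq P_{\infty}$, the $q$ remaining generators in $\Sigma_{\infty}$ when $R=P_{\infty}$), each pencil-plane contains $\ell_{\infty}$, and every one of these $q^2+1$ plane sections of $\cM_{a,b}$ has the same size $q^3+q^2+1$ and is exactly the union of its $q+1$ concurrent lines. So at the level of lines contained in $\cM_{a,b}$, nothing distinguishes $P_{\infty}$ from the other points of $\ell_{\infty}$, nor $\Sigma_{\infty}$ from the planes $x+y=c$. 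This is why the paper brings in an input you never use: by Corollary~4.3 of \cite{ACK}, $G$ is transitive on the affine points of $\cM_{a,b}$. The paper's proof first gets $\ell_{\infty}$ invariant from the (corrected) count, then uses this transitivity and the $G$-equivariant map sending each affine point to the trace on $\ell_{\infty}$ of its unique line to obtain transitivity on $\ell_{\infty}\setminus\{P_{\infty}\}$, hence the fixing of $P_{\infty}$; $\Sigma_{\infty}$ is then recovered as the span of the lines of $\cM_{a,b}$ through $P_{\infty}$, and the affine points as $\cM_{a,b}\setminus\Sigma_{\infty}$. To repair your proof you would need to import that transitivity (or comparable global information); the counting-and-span skeleton alone cannot deliver the lemma.
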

\begin{proof}
  By Theorem~\ref{prop:lines_bab}, the points of $\ell_{\infty}$ are
  the only points of $\cM_{a,b}$ through which more than one line of
  $\cM_{a,b}$ passes. So, any element of $G$ must map a point of
  $\ell_{\infty}$ onto a point of $\ell_{\infty}$.  We also know
	by \cite{ACK}*{Corollary 4.3} that $G$ acts transitively on the
  affine points of $\cM_{a,b}$. In particular, since for any affine
  point $Q$ of $\cM_{a,b}$ there is exactly one line $\ell_Q$ meeting
  $\ell_{\infty}$ in a point different from $P_{\infty}$, we get that
  $G$ is also transitive on the points of
  $\ell_{\infty}\setminus \{P_{\infty}\}$ and fixes $P_{\infty}$ itself.
  Finally, as $P_{\infty}$ is fixed by $G$, any collineation in $G$
  must send lines through $P_{\infty}$ to lines through $P_{\infty}$.
  However, all lines though $P_{\infty}$ in $\cM_{a,b}$ are contained
  in $\Sigma_{\infty}$ (and they actually span this hyperplane).  So
  $G$ stabilizes $\Sigma_{\infty}$ too.
\end{proof}

\begin{remark}
 \label{remNot}
	By Lemma \ref{autaut0}, the group $G$ is an affine group of collineations, as
	it fixes the hyperplane at infinity. As such, we can represent the elements of $G$
	by  $4\times 4$ matrices with elements in $ \GF{q^{2}}$ of the form
	\[M=\begin{pmatrix}
		1 & \gamma_{1} & \gamma_{2} & \gamma_{3} \\
		0 & d&e&h         \\
		0 &  f&g&i       \\
		0 &0&0& c
	\end{pmatrix},
	\]
	where $c(dg+ef)\neq 0$ and  $d+f=e+g$.
\end{remark}
The first column of $M$ is $(1,0,0,0)^t$ because $\phi(\Sigma_{\infty})=\Sigma_{\infty}$;
the last row of $M$ is $(0,0,0,c)$ because $\phi(P_{\infty})=P_{\infty}$. Furthermore $d+f=e+g$ since $\phi$ preserves the line $\ell_{\infty }$.

\section{Projective equivalence of $\cM_{a,b}$'s}
\label{s:eqiv}

In this section we are going to prove that the BM quasi-Hermitian varieties in $\PG(3,q^2)$, $q>2$ even are equivalent.
Let $\phi$ in $P\Gamma L_4(q^2)$. We
represent $\phi$ by a  non--singular
matrix $M$ together with a field automorphism $\sigma$. By convention, to
apply $\phi$ to some point we first apply $\sigma$ to each entry of the row vector
representing the point and then multiply on the right by the matrix $M$.
The maps $\tr:\GF{q^2}\to\GF{q}, x \mapsto x+x^{q}$ and $N:\GF{q^2}\to\GF{q}, x \mapsto x^{q+1}$ are the $\GF{q}$-trace and the $\GF{q}$-norm, respectively.

\begin{lemma}\label{fin}
	$\cM_{a,b}$ and $\cM_{a',b'}$ are equivalent as quasi-Hermitian varieties if and only if there is a collineation $\phi: \cM_{a,b}\mapsto\cM_{a',b'}$
	with associated field automorphism $\sigma$ and represented
        by a matrix
	\[
		M=\begin{pmatrix}
			1 & 0           & 0           & 0  \\
			0 & d           & e           & 0  \\
			0 & \lambda_1 e & \lambda_2 d & 0  \\
			0 & 0           & 0           & c
		\end{pmatrix}
	\]
	where $c\in\GF{q}^*$, $d,e\in\GF{q^2}$,  $\lambda_1,\lambda_2\in\GF{q^2}$ such that $\lambda_1^{q+1}=\lambda_2^{q+1}=1$  and one of the following holds
	\begin{enumerate}[label={\rm (\Roman*)}]
        \item\label{I}	 $e=0$, $d\neq 0$ and $\lambda_2=1$;
        \item\label{II}	 $e\neq0$,  $d=0$ and $\lambda_1=1$;
        \item\label{III}  $e\neq0\neq d$, $\lambda_1=\lambda_2=1$, and $d/e \in \GF{q}\setminus\{1\}$;	
        \item\label{IV}  $e\neq0\neq d$, $\lambda_2 \neq 1 \neq \lambda_1$, $\lambda_1\neq \lambda_2$ and  $d=\frac{(1+\lambda_1)}{(1+\lambda_2)}e$.
	\end{enumerate}
\end{lemma}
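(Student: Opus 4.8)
The plan is to show that, up to a harmless normalization, any collineation realizing the equivalence is forced into the displayed block shape, and then to read off the four cases from the central $2\times2$ block. First I would pin down the shape geometrically. Both $\cM_{a,b}$ and $\cM_{a',b'}$ carry the identical distinguished configuration of Theorem~\ref{prop:lines_bab}: in the given coordinates the line $\ell_{\infty}=[X+Y=0=J]$ is precisely the set of points of the variety lying on more than one of its lines, and $P_{\infty}=(0,0,0,1)$ is the unique such point through which only one line passes. Hence the argument of Lemma~\ref{autaut0} applies verbatim to $\phi$ and forces $\phi(P_{\infty})=P_{\infty}$, $\phi(\ell_{\infty})=\ell_{\infty}$, $\phi(\Sigma_{\infty})=\Sigma_{\infty}$; so $\phi$ is represented by a matrix as in Remark~\ref{remNot} (with $d+f=e+g$), together with a field automorphism $\sigma$. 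Since the stabilizer of $\cM_{a',b'}$ is transitive on affine points by \cite{ACK}*{Corollary 4.3}, I may first compose $\phi$ with a suitable element of that stabilizer and assume $\phi$ fixes the origin, which gives $\gamma_1=\gamma_2=\gamma_3=0$. Writing $u=x^{\sigma}$, $v=y^{\sigma}$, $w=z^{\sigma}$, the induced affine map is $X'=du+fv$, $Y'=eu+gv$, $Z'=hu+iv+cw$.

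Next comes the core step: impose that this map carries \eqref{eq:mab} for $\cM_{a,b}$ into \eqref{eq:mab} for $\cM_{a',b'}$. Concretely, the target equation must follow from $w^q+w=(a^{\sigma})^q(u^{2q}+v^{2q})+a^{\sigma}(u^2+v^2)+(b^q+b)^{\sigma}(u^{q+1}+v^{q+1})$. For each $(u,v)$ this relation is solvable in $w$ (its right-hand side lies in $\GF q$) and $w$ then ranges over a coset of $\GF q=\ker(w\mapsto w^q+w)$; since $w$ enters the target equation only through $c^qw^q+cw$, letting $w$ vary over the coset forces $c^q=c$, i.e. $c\in\GF q^*$. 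One may then substitute $c^qw^q+cw=c(w^q+w)$ and obtain an identity in the free variables $u,v$. Matching the linear terms $u,v,u^q,v^q$ gives $h=i=0$; matching $u^2,v^2$ (equivalently $u^{2q},v^{2q}$) gives $d^2+e^2=f^2+g^2$, i.e. $d+e=f+g$, together with the parameter link $c\,a^{\sigma}=a'(d^2+e^2)$; and matching $uv^q,u^qv,u^{q+1},v^{q+1}$ gives $df^q+eg^q=0$ and $d^{q+1}+e^{q+1}=f^{q+1}+g^{q+1}=:\mu$, with $c\,(b^q+b)^{\sigma}=(b'^q+b')\mu$. Equivalently $T=\left(\begin{smallmatrix} d&e\\ f&g\end{smallmatrix}\right)$ is a Hermitian similitude, $T\,\overline{T}^{\,\mathrm t}=\mu I$ with $\overline{T}$ the entrywise $q$-th power and $\mu\in\GF q$; since $(\det T)^{q+1}=\mu^2$, nonsingularity of $M$ is exactly $\mu\neq0$.

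It then remains to solve $d+f=e+g$ together with $df^q+eg^q=0$. Eliminating $g=d+e+f$ turns the second relation into $(d+e)f^q=e(d+e)^q$; nonsingularity excludes $d=e$, so $d+e\neq0$ and $f=e^{q}(d+e)^{1-q}$. A direct computation then gives $f^{q+1}=e^{q+1}$ and, using $g=d+e+f$, also $g^{q+1}=d^{q+1}$; hence whenever $d,e\neq0$ the ratios $\lambda_1:=f/e$, $\lambda_2:=g/d$ satisfy $\lambda_1^{q+1}=\lambda_2^{q+1}=1$. The four cases now follow from a short split: $e=0$ forces $f=0$ and $g=d$ ($\lambda_2=1$), case~\ref{I}; $d=0$ forces $g=0$ and $f=e$ ($\lambda_1=1$), case~\ref{II}; and for $d,e\neq0$ the relation $d(1+\lambda_2)=e(1+\lambda_1)$ shows $\lambda_1=1\Leftrightarrow\lambda_2=1$, the value $1$ giving $f=e$, $g=d$ with $d/e\in\GF q\setminus\{1\}$ (case~\ref{III}), and $\lambda_1,\lambda_2\neq1$ giving $d=\frac{1+\lambda_1}{1+\lambda_2}e$ with $\lambda_1\neq\lambda_2$ (case~\ref{IV}). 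For the converse, any $M$ of the stated shape satisfies $d+f=e+g$ and the similitude relation by construction, so reversing the substitution shows that it sends $\cM_{a,b}$ onto $\cM_{a',b'}$ for the $a',b'$ fixed by the parameter links above.

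I expect the main obstacle to be the bookkeeping in the coefficient comparison, and in particular justifying that the power-monomials in $u,v,u^q,v^q$ listed above are linearly independent as functions on $\GF{q^2}^2$: this is clear for $q\ge4$, but needs separate care when $q=2$, where $u^{2q}=u$ and $u^2=u^q$, so that several of the coefficient equations coalesce; that single small case I would check by hand. Everything else is routine once $T$ is recognized as a Hermitian similitude.
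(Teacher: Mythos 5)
Your proposal is correct in substance and follows the paper's skeleton (geometric normalization, then algebraic constraints, then the identical four-way case split), but your core computation is genuinely different from the paper's and in one respect cleaner. The paper never matches coefficients globally: it intersects with the planes $Y=0$ and $X=0$, substitutes five special values $x^{\sigma}\in\{1,\delta,\delta^q,\lambda,\lambda\delta\}$ to extract \eqref{fin1}, \eqref{fin2} and $h=0$ (and symmetrically $i=0$), then evaluates on general affine points to get only the trace condition \eqref{eq:4}, and finally invokes invariance of the Hermitian cone $\cM_{a,b}\cap\Sigma_{\infty}$ to upgrade this to $d^qf+e^qg=0$ via a degree-$q$ equation with $q+1$ roots. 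Your route eliminates $w$ using surjectivity of $w\mapsto w^q+w$ onto $\GF{q}$, turns membership into an identically vanishing reduced polynomial in $u,v$, and reads off all of $h=i=0$, $ca^{\sigma}=a'(d^2+e^2)$, $d^2+e^2=f^2+g^2$, $d^{q+1}+e^{q+1}=f^{q+1}+g^{q+1}$ and $d^qf+e^qg=0$ at once; in particular the cone-invariance step becomes unnecessary, since the coefficient of $u^qv$ already forces $(b'^q+b')(d^qf+e^qg)=0$ exactly (the paper's \eqref{eq:4} plus its own $q$-th power would have yielded the same). Your Hermitian-similitude packaging $T\overline{T}^{\,\mathrm t}=\mu I$ with $(\det T)^{q+1}=\mu^2$ is also a nice conceptual summary the paper does not make explicit, and your elimination $f=e^q(d+e)^{1-q}$ gives $\nr(\lambda_1)=\nr(\lambda_2)=1$ more directly than the paper's manipulation with $d^qe+e^qd=f^qg+g^qf=\lambda$. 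Your $q=2$ caveat about coalescing monomials is fair, but note it is not a disadvantage relative to the paper: for $q=2$ one has $\lambda=\delta^{q+1}=1$, so the paper's test values $\lambda$ and $\lambda\delta$ duplicate $1$ and $\delta$ and its deduction $C(\lambda+1)\in\GF{q}\Rightarrow C\in\GF{q}$ also degenerates.

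Two small repairs. First, your opening characterization of $P_{\infty}$ is misstated: through $P_{\infty}$ pass the $q+1$ cone lines of $\mathcal F$, so it is not ``the unique point through which only one line passes''; the correct intrinsic distinction (and the one the argument of Lemma~\ref{autaut0} actually uses) is that $P_{\infty}$ is the unique point of $\ell_{\infty}$ lying on no affine line of the variety, equivalently the vertex of the cone. Since you defer to that lemma's argument anyway, this is cosmetic. Second, in case~\ref{III} you assert $d/e\in\GF{q}$ without justification; it follows in one line from your relation $d^qf+e^qg=0$ with $f=e$, $g=d$, which gives $(d/e)^q=d/e$. Finally, the converse paragraph is superfluous for the statement as written (the ``if'' direction is definitional, as the paper itself notes), and if you do keep the verification you should add nonsingularity as an explicit hypothesis, since in case~\ref{IV} the listed conditions are necessary but a parameter choice such as $\lambda_2=\lambda_1^q$ would make $M$ singular.
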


\begin{proof}
	It is enough to prove the necessary condition. So assume that there is $\phi \in P\Gamma{}L_4(q^2)$ such that $\phi(\cM_{a,b})=\cM_{a',b'}$.
	Let $G_{1}=\Aut(\cM_{a,b})$ and $G_{2}=\Aut(\cM_{a',b'})$, then $G_{1}^{\phi}=G_{2}$.
	Since $(P_{\infty },\ell_{\infty },\Sigma_{\infty })$ are the unique
	subspaces of $PG_{3}(q^{2})$, preserved by $G_{i}$, $i=1,2$, having
	non-empty intersection with $\cM_{a,b}$ and $\cM_{a',b'}$, then $\phi $ preserves $%
		(P_{\infty },\ell_{\infty },\Sigma_{\infty })$. Further, $\phi $ preserves
	the Hermitian cone $\mathcal{C=}\cM_{a,b}\cap \Sigma_{\infty }=\cM_{a',b'}\cap
		\Sigma_{\infty }$, hence $\phi $ preserves each of the subsets $P_{\infty
			},\ell_{\infty },\mathcal{F},\Sigma_{\infty }$.
	
	Since both $G_{1}$ and $G_{2}$ act transitively on $\cM_{a,b}\setminus \Sigma
		_{\infty }$ and on $\cM_{a',b'}\setminus \Sigma_{\infty }$ by \cite[Corollary 4.3]{ACK}, there is $g_{i}\in G_{i}$
	such that $g_{2}\phi g_{1}$ is an isomorphism from $\cM_{a,b}$ onto $\cM_{a',b'}$
	fixing $O$. Thus, w.l.o.g. $\phi $ fixes $O=(1,0,0,0)$, and hence from Remark \ref{remNot} it is represented by the non-singular matrix
	
	\begin{equation}
		\label{s:phi}
		M=\begin{pmatrix}
			1 & 0 & 0 & 0  \\
			0 & d & e & h  \\
			0 & f & g & i  \\
			0 & 0 & 0 & c
		\end{pmatrix}.
	\end{equation}
	where $d+f=e+g$.	
	For any $\alpha\in\GF{q}$, $(1,0,0,\alpha)\in \cM_{a,b}$. So we deduce from~\eqref{s:phi} that necessarily
	$\phi(1,0,0,\alpha)=(1,0,0,c\alpha)\in \cM_{a',b'}$
	and hence $c\in\GF{q}^{*}$.
 
We consider now the plane of equation $Y=0$. Its intersection with $\cM_{a,b}$
	is given by a set of points
	$(1,x,0,z)$ such that
	\[
		ax^2+bx^{q+1}+z\in\GF{q},
	\]
	which implies that
	\begin{equation}\label{eq:1}
		a^{\sigma}{x^{2\sigma}}+b^{\sigma}{x^{\sigma(q+1)}}+z^{\sigma}\in\GF{q}.
	\end{equation}
	Suppose $\phi(1,x,0,z)\in\cM_{a',b'}$; then
	\[
		a'\frac{(d^2+e^2)}{c}x^{2\sigma}+b'\frac{(d^{q+1}+e^{q+1})}{c}x^{\sigma(q+1)}+\frac{h}{c}x^{\sigma}+z^{\sigma}\in\GF{q};
	\]
	this and~\eqref{eq:1} together give:
	\begin{equation}\label{eq:2}
		(a^{\sigma}+c^{-1}a'(d^2+e^2))x^{2\sigma}+(b^{\sigma}+c^{-1}b'(d^{q+1}+e^{q+1}))x^{\sigma(q+1)}+\frac{h}{c}x^{\sigma} \in\GF{q}.
	\end{equation}
 
	As $q>2$, we can choose  a primitive element  $\delta $ of $\GF{q^2}$ such that
	$\delta^q=1+\delta$, $\delta^2+\delta+\lambda=0$,  $\delta^{q+1}=\lambda\neq 0$ and the absolute trace of $\lambda$ equals $1$.

	We substitute in~\eqref{eq:2} the following values of $x^{\sigma}$ :
	\begin{enumerate}[label={\roman*})]
		\item\label{i-x}
		$x^{\sigma}=1$; thus
		\[
			(a^{\sigma}+c^{-1}a'(d^2+e^2))+(b^{\sigma}+c^{-1}b'(d^{q+1}+e^{q+1}))+\frac{h}{c}
			\in\GF{q};
		\]
		\item\label{ii-x}
		$x^{\sigma}=\delta$; thus
		\[
			(a^{\sigma}+c^{-1}a'(d^2+e^2))\delta^2+(b^{\sigma}+c^{-1}b'(d^{q+1}+e^{q+1}))\delta^{q+1}+\frac{h}{c}\delta
			\in\GF{q};
		\]
		
		\item\label{iii-x} $x^{\sigma}=\delta^q$ thus
		\[
			(a^{\sigma}+c^{-1}a'(d^2+e^2))\delta^{2q}+(b^{\sigma}+c^{-1}b'(d^{q+1}+e^{q+1}))\delta^{q+1}+\frac{h}{c}\delta^q
			\in\GF{q};
		\]
		\item\label{iv-x} $x^{\sigma}=\lambda$ thus
		\[
			(a^{\sigma}+c^{-1}a'(d^2+e^2))\lambda^2+(b^{\sigma}+c^{-1}b'(d^{q+1}+e^{q+1}))\lambda^2+\frac{h}{c}\lambda \in\GF{q};
		\]
		\item\label{v-x} $x^{\sigma}=\lambda\delta$ thus
		\[
			(a^{\sigma}+c^{-1}a'(d^2+e^2))(\delta\lambda^2)+(b^{\sigma}+c^{-1}b'(d^{q+1}+e^{q+1}))\lambda^2\delta^{q+1}+\frac{h}{c}\lambda\delta \in\GF{q}.
		\]
	\end{enumerate}
	Let $A=a^{\sigma}+a'\frac{(d^2+e^2)}{c}$; $B=(b^{\sigma}+b'\frac{(d^{q+1}+e^{q+1})}{c})$; $C=\frac{h}{c}$.
	
	From \ref{i-x} we obtain that $A=B+C+s$ where $s\in\GF{q}$, substituting it in \ref{iv-x} we obtain
	\begin{multline*}
		\cancel{B\lambda^2}+{C\lambda^2}+s\lambda^2+\cancel{B\lambda^2}+{C\lambda}\in\GF{q}\Rightarrow C\lambda(\lambda+1)\in\GF{q}\Rightarrow \\ C(\lambda+1)\in\GF{q} \Rightarrow C\in\GF{q}.
	\end{multline*}
	Summing up \ref{ii-x} and \ref{iii-x} we obtain
	\[
		A\underbrace{(\delta^2+\delta^{2q})}_{=1}+C\underbrace{(\delta+\delta^q)}_{=1}\in\GF{q}\Rightarrow A+C\in\GF{q}\Rightarrow A,C\in\GF{q}.
	\]
	Furthermore from \ref{ii-x} and $\delta^2=\lambda+\delta$
	\[
		A(\delta+\lambda)+B\lambda+C\delta\in\GF{q}\Rightarrow A\delta+C\delta \in\GF{q} \underbrace{\Rightarrow}_{\delta \notin\GF{q}} A=C.
	\]
	Finally,
	from \ref{v-x}
	\[
		A\lambda\delta+c\delta \in\GF{q} \Rightarrow A\lambda+C=0 , A=C=0.
	\]
	It follows that
	
	\begin{equation}\label{fin1}
		a^{\sigma}+a'\frac{(d^2+e^2)}{c}=0;
	\end{equation}

	\begin{equation}\label{fin2}
		(b^{\sigma}+b'\frac{(d^{q+1}+e^{q+1})}{c})\in\GF{q};
	\end{equation}
	and
	\[
		h=0.
	\]
	With a very similar argument with respect to the plane of equation $X=0$
	we can conclude
	\[
		a^{\sigma}+a'\frac{(f^2+g^2)}{c}=0;\text{ }(b^{\sigma}+b'\frac{(f^{q+1}+g^{q+1})}{c})\in\GF{q};\text{ }i=0.
	\]
	So,
	\begin{equation}\label{eq:3}
		d^2+e^2=f^2+g^2\neq 0 \text{ and } f^{q+1}+g^{q+1}=d^{q+1}+e^{q+1}\neq 0.
	\end{equation}
	
	We now know that $(1,x,y,z)\in \cM_{a,b}$ if and only if
	$\phi(1,x,y,z)\in \cM_{a',b'}$ and also
	$(1,x,y,z)\in \cM_{a,b}$ if and only if
	\begin{equation}
		a(x^2+y^2)+b(x^{q+1}+y^{q+1})+z\in\GF{q}.
	\end{equation}
	On the other hand, $\phi(1,x,y,z)=(1,dx^{\sigma}+fy^{\sigma},ex^{\sigma}+gy^{\sigma},cz^{\sigma})$, so
	$\phi(1,x,y,z)\in\cM_{a',b'}$ if and only if
	\[
		c^{-1}( a'(dx^{\sigma}+fy^{\sigma})^2+(ex^{\sigma}+gy^{\sigma})^2)
	\]
	\[
		+c^{-1}b'((dx^{\sigma}+fy^{\sigma})^{q+1}+(ex^{\sigma}+gy^{\sigma})^{q+1})+z^{\sigma}\in\GF{q}.
	\]
	This together with $(1,x,y,z)\in \cM_{a,b}$ leads to
	\[
		a^{\sigma}(x^{2\sigma}+y^{2\sigma})+ a'(\frac{(dx^{\sigma}+fy^{\sigma})^2}{c}+\frac{(ex^{\sigma}+gy^{\sigma})^2}{c})
	\]
	\[
		b^{\sigma}(x^{\sigma(q+1)}+y^{\sigma(q+1)})+b'\frac{(dx^{\sigma}+fy^{\sigma})^{q+1}}{c}+\frac{(ex^{\sigma}+gy^{\sigma})^{q+1})}{c} \in\GF{q}.
	\]
	Using~ \eqref{fin1}, \eqref{fin2} and \eqref{eq:3} we obtain
	\begin{equation}\label{eq:4}
		b'[(d^qf+e^qg)x^{\sigma q}y^{\sigma}+(df^q+eg^q)x^{\sigma}y^{\sigma q}]\in\GF{q}.
	\end{equation}
	
	Let $\omega \in\GF{q^2}$ be a solution of $\xi^{q+1}=1$. Since $\phi$ has to leave invariant the Hermitian cone $\cM_{a,b}\cap \Sigma_{\infty}$, we have $\phi(0,x,\omega x,z)\in \cM_{a,b}\cap \Sigma_{\infty}$. Again using~\eqref{eq:3} we have:
	\[
		(d^qf+e^qg)\omega^{\sigma}+(df^q+eg^q)\omega^{\sigma q}=0
	\]
	for any of the $q+1$ values $\omega$ such that $\omega^{q+1}=1$. This means that we found $q+1$ solutions to an equation of degree $q$, so it must be
	\begin{equation}\label{eq:5}
		d^qf+e^qg=0
	\end{equation}
	If $e=0$, since $d\neq 0$, we get $f=0$ and from $e+d=f+g$ we obtain $d=g$ that is~\ref{I}. Suppose $e \neq 0$. If $d=0$ then also $g=0$ and $e=f$, that is~\ref{II}.
	 If $d\neq 0$, as also $f\neq 0$  from \eqref{eq:5}
	we have$ (\frac{d}{e})^q=\frac{g}{f}$.

	From~\eqref{eq:3} we have:
	\begin{align*}
		(d+e)^{q+1}               & =(f+g)^{q+1}                  \\
		d^{q+1}+d^qe+e^qd+e^{q+1} & =f^{q+1}+f^qg+g^qf+g^{q+1}    \\
		d^qe+e^qd                 & =f^qg+g^qf=\lambda\in\GF{q}.
	\end{align*}

	Thus, we get
	\begin{align*}
		\frac{d^{q}}{e^q} & =\frac{\lambda}{e^{q+1}}+\frac{d}{e} \\
		\frac{g^{q}}{f^q} & =\frac{\lambda}{f^{q+1}}+\frac{g}{f}
	\end{align*}
	and hence $\frac{d}{e}+\frac{g}{f}=\frac{\lambda}{e^{q+1}}=\frac{\lambda}{f^{q+1}}$,
	which implies $\lambda=0$ or $e^{q+1}=f^{q+1}$ and $d^{q+1}=g^{q+1}$.
	
	In the case in which $\lambda=0$ then $df+ge=0$. Put $d/e=g/f=\alpha$. Then $\alpha \neq 1$ and from $d+e=g+f$ we get
	$e(\alpha+1)=(\alpha+1)f$. Hence $e=f$ and $d=g$, that is case~\ref{III} holds.
	
	If $\lambda \neq 0$ then $f=\lambda_1 e$ and $g=\lambda_2 d$ such that $\nr(\lambda_1)=\nr(\lambda_2)=1$. If $\lambda_1=\lambda_2=1$, then we get again
	case~\ref{III}.
	If $\lambda_1\neq 1$, then also $\lambda_2\neq 1$ and we get $(1+\lambda_2)d=(1+\lambda_1)e$, furthermore if $\lambda_1=\lambda_2$ then $d=e$ and $M$ would be singular so $\lambda_1\neq \lambda_2$, that is case~\ref{IV}
	.
	This concludes the proof.
\end{proof}

From the previous Lemma, taking into account  conditions  \eqref{fin1} and \eqref{fin2}, we get the following.

\begin{lemma}
	\label{nolinear}Let $(a,b), (a',b') \in \GF{q^2}^*\times (\GF{q^2}\setminus \GF{q})$ with  $(a',b')\neq (a,b)$. There is $\phi \in
		P\Gamma L_{4}(q^{2})$ such that $\cM_{a,b}^{\phi }=\cM_{a',b'}$ if and only if
	\[
		\left\{
		\begin{array}{ccl}
			a' & = & ca^{\sigma}/(d^2+e^2)            \\
			b' & = & cb^{\sigma}/(d^{q+1}+e^{q+1})+u
		\end{array}%
		\right.
	\]
	for some $c \in\GF{q}^{\ast}$ $u \in \GF{q}$, and $d,e$ satisfying the conditions of Lemma~\ref{fin}.
\end{lemma}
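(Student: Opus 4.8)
The plan is to read off Lemma~\ref{nolinear} directly from Lemma~\ref{fin}, since that lemma already fixes the matrix shape of any admissible $\phi$ and its proof isolates the two scalar constraints~\eqref{fin1} and~\eqref{fin2} together with~\eqref{eq:3}. The entire content of the present statement is then to solve these constraints for $a'$ and $b'$ and to recognise that the resulting parameters $c,u$ lie in $\GF{q}$.

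For the necessity I would assume that some $\phi\in P\Gamma L_4(q^2)$ satisfies $\cM_{a,b}^{\phi}=\cM_{a',b'}$. By Lemma~\ref{fin} it is represented by a matrix of the prescribed block form with field automorphism $\sigma$ and with $c\in\GF{q}^{*}$ and $d,e\in\GF{q^2}$ subject to the listed conditions, and by~\eqref{eq:3} both $d^2+e^2$ and $d^{q+1}+e^{q+1}$ are nonzero. Working in characteristic $2$, relation~\eqref{fin1} rearranges at once to $a'=ca^{\sigma}/(d^2+e^2)$. For~\eqref{fin2} the key observation is that $d^{q+1}=\nr(d)$ and $e^{q+1}=\nr(e)$ are norms, so $d^{q+1}+e^{q+1}\in\GF{q}^{*}$; writing~\eqref{fin2} as $b^{\sigma}+b'(d^{q+1}+e^{q+1})/c=s$ with $s\in\GF{q}$ and solving yields $b'=cb^{\sigma}/(d^{q+1}+e^{q+1})+u$ with $u=cs/(d^{q+1}+e^{q+1})$; since $c,s,d^{q+1}+e^{q+1}$ all lie in $\GF{q}$, indeed $u\in\GF{q}$. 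A cheap consistency check then confirms $a'\in\GF{q^2}^{*}$ and $b'\in\GF{q^2}\setminus\GF{q}$, using that $\sigma$ fixes $\GF{q}$ setwise.

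For the sufficiency I would run the computation backwards. Given $c\in\GF{q}^{*}$, $u\in\GF{q}$ and $d,e$ satisfying the conditions of Lemma~\ref{fin}, set $f=\lambda_1 e$, $g=\lambda_2 d$, $h=i=0$, and let $\phi$ be the collineation with automorphism $\sigma$ and matrix $M$ as in that lemma. Defining $a',b'$ by the displayed formulas is then exactly equivalent to imposing~\eqref{fin1} and~\eqref{fin2}; since the membership analysis in the proof of Lemma~\ref{fin} — reducing $(1,x,y,z)\in\cM_{a,b}\iff\phi(1,x,y,z)\in\cM_{a',b'}$ to~\eqref{fin1},~\eqref{fin2},~\eqref{eq:3},~\eqref{eq:5} and the invariance of the Hermitian cone at infinity — is a chain of equivalences, reversing it gives $\cM_{a,b}^{\phi}=\cM_{a',b'}$.

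The only genuinely delicate point is the treatment of~\eqref{fin2}: one must notice that the $\GF{q}$-indeterminacy $s$ produced there is absorbed into a single additive constant $u\in\GF{q}$, which relies precisely on $d^{q+1}+e^{q+1}$ being a nonzero element of $\GF{q}$. The remaining work — that the backward direction introduces no constraints beyond those already recorded in Lemma~\ref{fin} — is routine, amounting to the observation that the earlier derivation was reversible.
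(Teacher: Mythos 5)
Your proposal is correct and takes essentially the paper's own route: the paper derives Lemma~\ref{nolinear} precisely by reading off conditions \eqref{fin1} and \eqref{fin2} (together with \eqref{eq:3}) from the proof of Lemma~\ref{fin}, solving for $a'$ and $b'$, and noting that $d^{q+1}+e^{q+1}$ is a nonzero element of $\GF{q}$ (a sum of norms), so the $\GF{q}$-ambiguity in \eqref{fin2} is absorbed into the single constant $u\in\GF{q}$. Your sketch of the sufficiency direction --- checking via \eqref{fin1}, \eqref{fin2}, \eqref{eq:3} and \eqref{eq:5} that the membership computation reverses --- is in fact slightly more explicit than the paper, which states the lemma as an immediate consequence of Lemma~\ref{fin} without spelling that direction out.
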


Assume that $\cM_{a,b}$ and $\cM_{a',b'}$ are projectively equivalent. In this case we write $(a,b) \sim (a',b')$ where $\sim$ is in particular an equivalence relation on the ordered pairs $(a,b)\in \GF{q^2}^2$
such that $a \neq 0$ and $b \in \GF{q^2}\setminus \GF{q}$.

\begin{lemma}\label{lemadd5}
	Let $\cM_{a,b}$ be a BM quasi-Hermitian variety of $\PG(3,q^2)$, $q$ even
	and $\varepsilon$ be a primitive element of $\GF{q^2}$.
	Then, there exists $\alpha\in\GF{q^2}^*$ such that
	$\cM_{a,b}$ is  equivalent to $\cM_{\alpha,\varepsilon}$.
\end{lemma}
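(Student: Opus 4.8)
The plan is to read the statement directly off the parametrization of equivalences recorded in Lemma~\ref{nolinear}, showing that the induced action on the second parameter already suffices to reach the prescribed primitive element $\varepsilon$. Among the admissible collineations I will use only the simplest family, namely case~\ref{I} of Lemma~\ref{fin} with $\sigma=\mathrm{id}$, $e=0$ and $d=1$; here $M=\mathrm{diag}(1,1,1,c)$, so $d^2+e^2=d^{q+1}+e^{q+1}=1$, and Lemma~\ref{nolinear} degenerates to $a'=ca$ and $b'=cb+u$ with $c\in\GF{q}^*$ and $u\in\GF{q}$ both free. Thus it will be enough to produce $c\in\GF{q}^*$ and $u\in\GF{q}$ with $cb+u=\varepsilon$, and then set $\alpha=ca$, which lies in $\GF{q^2}^*$ since $a\neq 0$ and $c\neq 0$.

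The heart of the argument is therefore the elementary claim that the group $T=\{x\mapsto cx+u : c\in\GF{q}^*,\ u\in\GF{q}\}\cong\mathrm{AGL}_1(q)$, acting on $\GF{q^2}$ by $\GF{q}$-affine maps, is transitive on $\GF{q^2}\setminus\GF{q}$. I would verify this by a freeness-plus-counting argument: one has $|T|=q(q-1)=|\GF{q^2}\setminus\GF{q}|$, and the $T$-action on $\GF{q^2}\setminus\GF{q}$ is free, because $cx+u=x$ with $x\notin\GF{q}$ forces $(c+1)x=u\in\GF{q}$, whence $c=1$ and then $u=0$. A free action of a finite group on a set of equal cardinality is regular, in particular transitive. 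Equivalently, and more explicitly, writing $b=b_0+b_1\delta$ and $\varepsilon=\varepsilon_0+\varepsilon_1\delta$ in the $\GF{q}$-basis $\{1,\delta\}$ (where $b_1,\varepsilon_1\neq 0$ precisely because $b,\varepsilon\notin\GF{q}$), one solves at once $c=\varepsilon_1/b_1\in\GF{q}^*$ and $u=\varepsilon_0+cb_0\in\GF{q}$.

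Applying transitivity with source $b$ and target $\varepsilon$ yields the required $c,u$, and hence, via Lemma~\ref{nolinear}, a collineation $\phi$ with $\cM_{a,b}^{\phi}=\cM_{\alpha,\varepsilon}$ for $\alpha=ca$; in the degenerate situation $b=\varepsilon$ one simply takes $\phi=\mathrm{id}$ and $\alpha=a$, invoking reflexivity of $\sim$ (otherwise $\varepsilon\neq b$ guarantees $(\alpha,\varepsilon)\neq(a,b)$, so Lemma~\ref{nolinear} applies directly). I do not expect a genuine obstacle here; the only point requiring care is that $\varepsilon$, being primitive, has multiplicative order $q^2-1$ and therefore does not lie in $\GF{q}$, which is exactly what places $\varepsilon$ in the single $T$-orbit $\GF{q^2}\setminus\GF{q}$ and ensures $\varepsilon_1\neq 0$. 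In fact this reasoning proves slightly more, namely that $\cM_{a,b}$ is equivalent to $\cM_{\alpha,b'}$ for \emph{any} prescribed $b'\in\GF{q^2}\setminus\GF{q}$; the specialization $b'=\varepsilon$ is merely the convenient normalization for the ensuing enumeration.
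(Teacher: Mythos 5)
Your proposal is correct and is essentially the paper's own argument: both proofs read the lemma directly off Lemma~\ref{nolinear} with $\sigma=\mathrm{id}$ and $e=0$, by choosing the free parameters so that $cb/d^{q+1}+u=\varepsilon$. The only cosmetic difference is that the paper fixes $c=1$ and uses surjectivity of the norm to pick $d$ with $d^{q+1}=b_1$, where $b=b_0+\varepsilon b_1$ with $b_0,b_1\in\GF{q}$, whereas you fix $d=1$ and solve for $c=\varepsilon_1/b_1$ and $u$ in an $\GF{q}$-basis --- i.e.\ you invoke transitivity of the $\GF{q}$-affine maps $x\mapsto cx+u$ on $\GF{q^2}\setminus\GF{q}$, which is the same one-line computation packaged differently.
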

\begin{proof}
  Write $b=b_0+\varepsilon b_1$, with $b_0,b_1 \in \GF{q}$ and $b_1 \neq 0$. Then, there exists $d\in \GF{q^2}^*$, such that  $b_1/d^{q+1}=1$.
  Therefore one can apply \Cref{nolinear} with $ c=1, e=0, u=b_0/d^{q+1}$ and obtain $(a,b)
 \sim (a/d^2, b/d^{q+1}+b_0/d^{q+1})=(a/d^2, \varepsilon) $.
\end{proof}

\begin{theorem}
\label{mainequiv}
	All BM quasi-Hermitian varieties of $\PG(3,q^2)$, $q$ even, are equivalent.
\end{theorem}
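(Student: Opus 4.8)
The plan is to bootstrap the three preparatory lemmas into a single transitivity argument on the first parameter. First I would apply Lemma~\ref{lemadd5} to collapse the two-parameter family to a one-parameter one: every $\cM_{a,b}$ is equivalent to some $\cM_{\alpha,\varepsilon}$ with $\varepsilon$ a fixed primitive element of $\GF{q^2}$. Since $\sim$ is an equivalence relation, it then suffices to prove $\cM_{\alpha,\varepsilon}\sim\cM_{\alpha',\varepsilon}$ for all $\alpha,\alpha'\in\GF{q^2}^*$. I would in fact restrict to the linear case $\sigma=\mathrm{id}$: the ratios $\alpha'/\alpha$ realised by such equivalences fixing the second parameter do not depend on $\alpha$, and hence form a subgroup of $\GF{q^2}^*$; so it is enough to show this subgroup contains the group $\mu_{q+1}$ of $(q+1)$-st roots of unity together with a generator of every norm coset.

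Next I would feed $b=b'=\varepsilon$ into Lemma~\ref{nolinear} with $\sigma=\mathrm{id}$. Because $d^{q+1}+e^{q+1}\in\GF{q}^*$, $c,u\in\GF{q}$ and $\varepsilon\notin\GF{q}$, the relation $\varepsilon=c\varepsilon/(d^{q+1}+e^{q+1})+u$ forces $c=d^{q+1}+e^{q+1}$ and $u=0$, after which the $a$-relation becomes
\[
 \frac{\alpha'}{\alpha}=\frac{d^{q+1}+e^{q+1}}{(d+e)^2},
\]
using $d^2+e^2=(d+e)^2$. Writing $t=d/e$ this equals $e^{q-1}(\nr(t)+1)/(t+1)^2$, so the factor $e^{q-1}$ sweeps out all of $\mu_{q+1}$ and only the norm coset of the ratio is constrained, with
\[
 \nr\!\left(\frac{\alpha'}{\alpha}\right)=\left(\frac{\nr(t)+1}{\nr(t)+\tr(t)+1}\right)^{2}.
\]
In cases~\ref{I}--\ref{III} one has $t\in\GF{q}$ (or $e=0$), hence $\tr(t)=0$ and the ratio lies in $\mu_{q+1}$: these equivalences move $\alpha$ only within its norm coset. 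Consequently the norm can change only through case~\ref{IV}, in which $\tr(t)\neq0$.

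The computational heart, and the step I expect to be the main obstacle, is to show that case~\ref{IV} supplies a generator of every norm coset. Setting $\tau=\tr(t)/(\nr(t)+1)\in\GF{q}^*$ the norm factor above is $1/(1+\tau)^2$; since squaring is bijective on $\GF{q}^*$, it suffices to prove that $\tau$ attains every value of $\GF{q}\setminus\{0,1\}$. Eliminating $\nr(t)$ from $\tau=\tr(t)/(\nr(t)+1)$ and imposing the admissibility of $t$ — namely that $X^2+\tr(t)X+\nr(t)$ be irreducible over $\GF{q}$, i.e. $\mathrm{Tr}_{\GF{q}/\GF{2}}(\nr(t)/\tr(t)^2)=1$ — I would reduce the solvability, after the substitution $w=v^{2}+v$ that parametrises $\ker \mathrm{Tr}_{\GF{q}/\GF{2}}$, to requiring that the $\GF{2}$-linear functional $w\mapsto \mathrm{Tr}_{\GF{q}/\GF{2}}(w/\tau^{2})$ be non-trivial on the hyperplane $\ker \mathrm{Tr}_{\GF{q}/\GF{2}}$. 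By nondegeneracy of the trace form this fails only when $\tau=1$, which yields exactly the desired range $\tau\in\GF{q}\setminus\{0,1\}$. The genuinely delicate point buried here is that the admissible $t$ in case~\ref{IV} are not arbitrary but of the shape $(1+\lambda_1)/(1+\lambda_2)$ with $\lambda_1,\lambda_2\in\mu_{q+1}$; so one must still check that among the many pairs $(\tr(t),\nr(t))$ realising a given $\tau$, at least one arises from such $\lambda_1,\lambda_2$ — a root-existence condition for a quadratic $B\mu^2+\tr(t)\mu+B^q=0$ whose root-product $B^q/B$ already lies in $\mu_{q+1}$, so that its roots lie on $\mu_{q+1}$ together or not at all. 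This is where even characteristic is used decisively, the obstruction present for $q$ odd being absent here. Granting this, the realised ratios form a subgroup of $\GF{q^2}^*$ containing $\mu_{q+1}$ and a generator of each norm coset, hence all of $\GF{q^2}^*$; combined with Lemma~\ref{lemadd5} this shows all $\cM_{a,b}$ are equivalent.
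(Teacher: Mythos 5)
Your overall route is in fact the paper's: reduce via Lemma~\ref{lemadd5} to $b=b'=\varepsilon$, take $\sigma=\mathrm{id}$ in Lemma~\ref{nolinear} so that $\varepsilon\notin\GF{q}$ forces $c=d^{q+1}+e^{q+1}$ and $u=0$, and then prove that $(d,e)\mapsto (d^{q+1}+e^{q+1})/(d+e)^2$ is surjective onto $\GF{q^2}^*$ using the case~\ref{IV} parametrization $d/e=(1+\lambda_1)/(1+\lambda_2)$ with $\nr(\lambda_i)=1$. Your norm-coset bookkeeping (cases \ref{I}--\ref{III} yield exactly the subgroup $\mu_{q+1}$ of norm-one ratios via the free factor $e^{q-1}$, while only case~\ref{IV} can move the norm) is a clean reorganization of the same computation, and your determination of the range of $\tau=\tr(t)/(\nr(t)+1)$ is correct. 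But there is a genuine gap exactly where you flag it: you never prove the ``root-existence condition'', and your parenthetical argument does not prove it. That the product of the roots of $B\mu^{2}+\tr(t)\mu+B^{q}=0$ is $B^{q-1}\in\mu_{q+1}$ only shows that \emph{if} one root lies on $\mu_{q+1}$ then both do; it does not exclude the a priori possibility that the involution $\mu\mapsto\mu^{-q}$ interchanges two roots neither of which has norm $1$. Since the whole case~\ref{IV} analysis is prefaced by ``Granting this'', the proof is incomplete at its decisive step.

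The gap is real but closes in one line, and in a way that makes your worry about selecting special pairs $(\tr(t),\nr(t))$ unnecessary. With $B=t+\nr(t)$ and $T=\tr(t)$ one has, in characteristic $2$,
\[
B+B^{q}=\tr\bigl(t+\nr(t)\bigr)=\tr(t)=T,
\]
so $\mu=1$ is \emph{always} a root of $B\mu^{2}+T\mu+B^{q}=0$; hence the second root is $\lambda_2=B^{q-1}\in\mu_{q+1}$, and $\lambda_2\neq 1$ precisely when $B\notin\GF{q}$, i.e.\ when $t\notin\GF{q}$. Setting $\lambda_1=(1+t)+t\lambda_2$, which has norm $1$ by the very construction of the quadratic and satisfies $\lambda_1\neq 1$ and $\lambda_1\neq\lambda_2$ (the latter because $t\notin\GF{q}$), shows that \emph{every} $t\in\GF{q^2}\setminus\GF{q}$ with $\nr(t)\neq 1$ occurs in case~\ref{IV}. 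Combined with your (correct) functional argument — after substituting $u=1/\tr(t)$ the requirement becomes $\mathrm{Tr}_{\GF{q}/\GF{2}}\bigl(u(1+\tau^{-1})\bigr)=1$, solvable iff $\tau\neq 1$ — this completes the proof. For comparison, the paper sidesteps the issue differently: it fixes $\lambda_1$ and solves $\lambda_2^{2}+(1+m^{q+1})\tr(\lambda_1)\lambda_2+1=0$, a quadratic over $\GF{q}$ with constant term $1$, so that irreducibility over $\GF{q}$ automatically places $\lambda_2$ on $\mu_{q+1}$, the characteristic-$2$ trace criterion there playing the role of your linear functional.
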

\begin{proof}
  In light of Lemma~\ref{lemadd5},
  in order to determine the equivalence
  classes of BM quasi-Hermitian varieties it is enough to determine when
  two varieties $\cM_{a,\varepsilon}$ and $\cM_{a',\varepsilon}$
  are linearly equivalent.
  In particular, we consider the case $\sigma=id$.
  Then, $\cM_{a,\varepsilon}$ and $\cM_{a',\varepsilon}$ are equivalent
  if and only if
  \[ a'=ca/(d^2+e^2); \]
  \[ \varepsilon(1+c/(d^{q+1}+e^{q+1}))=u. \]
  As $1+c/(d^{q+1}+e^{q+1})\in\GF{q}$ and $u\in\GF{q}$, we must have
  $c/(d^{q+1}+e^{q+1})=1$ for the second equation to be possible.
  Replacing in the first equation we get
  \[ a'=a\frac{d^{q+1}+e^{q+1}}{d^2+e^2}. \]
  We claim that this yields just one equivalence class; this is the same
  as to say that the function
  \[ (d,e)\mapsto\frac{d^{q+1}+e^{q+1}}{d^2+e^2} \]
  is surjective on $\GF{q^2}^*$.

 We know that $d(1+\lambda_2)=e(1+\lambda_1)$ with $\nr(\lambda_1)=\nr(\lambda_2)=1$.  Assume $\lambda_2\neq 1$ and put $\beta=\frac{1+\lambda_1}{1+\lambda_2}$
 Hence we have to prove that  for each $m^2\in\GF{q^2}$ (recall that
 in characteristic $2$ the map $x\mapsto x^2$ is bijective) there are  $e, \lambda_1,\lambda_2 \in \GF{q^2} $ such that

\[e^{q-1}=m^2 \frac{(1+\beta)^2}{(1+\beta^{q+1})}.\]
 This is possible if and only if
\[m^{2(q+1)} \frac{(1+\beta)^{2(q+1)}}{(1+\beta^{q+1})^2}=1\]
that is
\[m^{q+1}\frac{(1+\beta)^{(q+1)}}{(1+\beta^{q+1})}=1,\]
whence
\[m^{q+1}(\lambda_1^q+\lambda_1)=\lambda_2^q+\lambda_2+\lambda_1^q+\lambda_1.\]
For a chosen $\lambda_1$ we have to find $\lambda_2$ such that
\[  \lambda_2^q+\lambda_2=(1+m^{q+1})(\lambda_1^q+\lambda_1 ), \]
 that is
 \[  \lambda_2^2+(1+m^{q+1})(\lambda_1^q+\lambda_1 )\lambda_2+1=0. \]
Since the absolute trace of $\frac{1}{(1+m^{q+1})(\lambda_1^q+\lambda_1)^2}$ is zero we can find  $\lambda_2$ with the desired properties.

\end{proof}

\section{The stabilizer of $\cM_{a,b}$}
\newcommand{\fA}{\mathfrak a}
\newcommand{\fB}{\mathfrak b}
\newcommand{\fC}{\mathfrak c}
\newcommand{\fD}{\mathfrak d}
\newcommand{\fE}{\mathfrak e}
\newcommand{\fF}{\mathfrak f}
\newcommand{\fG}{\mathfrak g}
\newcommand{\fH}{\mathfrak h}
\newcommand{\fI}{\mathfrak i}
\newcommand{\fJ}{\mathfrak j}
\newcommand{\fK}{\mathfrak k}
\newcommand{\fL}{\mathfrak l}
\newcommand{\fM}{\mathfrak m}
\newcommand{\fN}{\mathfrak n}
\newcommand{\fO}{\mathfrak o}
\newcommand{\fP}{\mathfrak p}
\newcommand{\fQ}{\mathfrak q}
\newcommand{\fR}{\mathfrak r}

\label{s:stab}
In this section we shall provide a full description of the stabilizer in $P\Gamma L_{4}(q^{2})$, $q>2$ even,
of the quasi-Hermitian variety $\cM_{a,b}$.
Throughout the
section we shall adopt the notation and the conventions of \cite{ATLAS}.
In particular, $C_m$ is the cyclic group with $m$ elements, while $E_m$
is the elementary Abelian group of order $m$. If $A$ and $B$ are two
groups, we denote by $A\times B$ the direct product of $A$ and $B$, $A.B$ the upward extension
of $A$ by $B$ (i.e. the group $G$ with $A\unlhd G$ such that $G/A\cong B$) and $A:B$
the semidirect product between $A$ and $B$ (where $A$ is
normal in $A:B$ and $B$ acts by conjugation as an automorphism group of $A$).

Let $\phi_s, \psi_{\gamma}, \mu_{\delta}$ where
$s\in\GF{q}, \delta\in\GF{q}^*,
\gamma=(\gamma_1,\gamma_2)\in\GF{q^2}^2$, be the elations associated
with the following non-singular matrices:
\[
  \phi_{s}:
  \begin{pmatrix}
    1 & 0 & 0 & s  \\
    0 & 1 & 0 & 0  \\
    0 & 0 & 1 & 0  \\
    0 & 0 & 0 & 1
  \end{pmatrix}; \qquad
  \psi_{\gamma}(a,b):
  \begin{pmatrix}
    1 & \gamma_1 & \gamma_2 & a(\gamma_1^2+\gamma_2^2)+b(\gamma_1^{q+1}+\gamma_2^{q+1}) \\
    0 & 1        & 0        & (b+b^q)\gamma_1^q                           \\
    0 & 0        & 1        & (b+b^q)\gamma_2^q                           \\
    0 & 0        & 0        & 1
  \end{pmatrix};
\]
\[
  \mu_{\delta}:\text{diag}(1,\delta,\delta,\delta^2).
\]
It follows from \cite[Corollary 4.3]{ACK} that
$$H:=\left\langle \phi_{s},\psi_{\gamma }(a,b),\mu_{\delta}:s\in\GF{q}\textit{, }\gamma \in\GF{q^{2}}^{2}\textit{, }\delta
  \in\GF{q}^{\ast }\right\rangle $$ is a subgroup of $G$ preserving
the Hermitian cone
\[
  \cM_{a,b}\cap \Sigma_{\infty }=\left\{ (0,1,\omega
    ^{i(q-1)},k):i=1,\dots,q+1%
    \textit{, }k\in\GF{q^{2}}^{\ast }\right\}\cup P_{\infty}
\]%
where $\omega$ is a primitive element of $\GF{q^2}$. The subgroup
$$S=\left\langle \phi _{s},\psi_{\gamma }(a,b):s\in\GF{q}\textit{, }\gamma
  \in\GF{q^2}^{2}\right\rangle $$ is a normal Sylow $2$-subgroup of
$H$ and $$ K=\left\langle \phi_{s}:s\in\GF{q}\right\rangle $$ is the
kernel of the action of $H$ on $\Sigma_{\infty }$.
The group
\[ D:=\{\mu_\delta:\delta\in\GF{q}^\ast\}\]
is cyclic of order $q-1$.
Also,
$S$ acts regularly on the $q^{5}$
points of $\cM_{a,b}\setminus \Sigma_{\infty }$.

It can be immediately deduced from \cite[Section 4]{ACK} that the
 induced group $\bar{H}$ on $\Sigma_{\infty }$ is a Frobenius group
 $\bar{H}=\bar{S}:\bar{D}$
 of
order $%
q^{4}(q-1)$ where
\begin{enumerate}
\item $\bar{S}$ is an elementary Abelian $2$-group of order $q^{4}$.
  It is the kernel of $\bar{H}$ and consists of the elations of
  $\Sigma_{\infty }$ with center $P_{\infty }$;	
\item $\bar{D}$ is a  group of $(P_{\infty },m_{\infty })$-homologies of
  $\Sigma _{\infty }$, where $m_{\infty}$ is the line $J=Z=0$.
\end{enumerate}

Thus
$\bar{H}\trianglelefteq \bar{G}\leq \bar{S}:\left( C_{q+1}\times
  \GL{2}{q}\right)$ since the second one is the stabilizer in
$\PGL{3}{q^{2}}$ of the Hermitian cone being $q$ even.

\bigskip

Let $U=\left\{ \tau_{e}:e\in\GF{q}\right\}$, where $\tau_{e}$ is the
elation of $\PGL{4}{q^2}$ represented by the matrix%
\[
  \left( \allowbreak
    \begin{array}{cccc}
      1 & 0   & 0   & 0 \\
      0 & e+1 & e   & 0 \\
      0 & e   & e+1 & 0 \\
      0 & 0   & 0   & 1 %
    \end{array}%
  \right).
\]%
Then $U$ induces on $\Sigma_{\infty }$ a group of
$(V_{\infty },\ell _{\infty })$-homologies, where $V_{\infty
}=(0,1,1,0)$. Further
$(0,1,\omega ^{i(q-1)},k)^{\tau_{e}}=(0,1+\left( 1+\omega
  ^{i(q-1)}\right) e,\omega ^{i(q-1)}+\left( 1+\omega ^{i(q-1)}\right)
e,k)$ with%
\begin{eqnarray*}
  \frac{\omega ^{i(q-1)}+\left( 1+\omega ^{i(q-1)}\right) e}{1+\left( 1+\omega
  ^{i(q-1)}\right) e} &=&\omega ^{i(q-1)}\frac{1+\left( 1+\omega
                          ^{-i(q-1)}\right) e}{1+\left( 1+\omega ^{i(q-1)}\right) e} \\
                      &=&\omega ^{i(q-1)}\frac{1+\left( 1+\omega ^{qi(q-1)}\right) e}{1+\left(
                          1+\omega ^{i(q-1)}\right) e} \\
                      &=&\omega ^{i(q-1)}\left[ 1+\left( 1+\omega ^{i(q+1)}\right) e\right] ^{q-1}
\end{eqnarray*}%
since $e\in\GF{q}$. Thus $\tau_{e}$, and hence $U$, preserves
$\cM_{a,b}\cap \Sigma_{\infty }$ fixing $\ell_{\infty }$. Further, $U$
preserves $%
\cM_{a,b}\setminus \Sigma_{\infty }$. Indeed, if
$(1,x_{0},y_{0},z_{0})\in \cM_{a,b}\setminus \Sigma_{\infty }$,
\begin{eqnarray*}
  z_{0}^{q}+z_{0}+a^{q}(x_{0}^{2q}+y_{0}^{2q})+a(x_{0}^{2}+y_{0}^{2})
  &=&(b^{q}+b)(x_{0}^{q+1}+y_{0}^{q+1})+\\
  &&\qquad (b^{q}+b)\tr(e)(x_{0}+y_{0})^{q+1} \\
  &=&(b^{q}+b)(x_{0}^{q+1}+y_{0}^{q+1})
\end{eqnarray*}
since $e\in\GF{q}$, and so
$(1,x_{0},y_{0},z_{0})^{\tau_{e}}\in \cM_{a,b}\setminus \Sigma_{\infty
}$. Therefore $U=\left\{ \tau_{e}:e\in\GF{q}\right\} $ is an
elementary Abelian $2$-group preserving $%
\cM_{a,b}\setminus \Sigma_{\infty }$. Then $U$ preserves both
$\cM_{a,b}$ and $%
\cB_{a,b}$ since $U$ preserves $\cM_{a,b}\cap \Sigma_{\infty }$ fixing
$\ell _{\infty }$.

\bigskip

\begin{lemma}\label{ker}
  $K$ is the kernel of the action of $G$ on $\Sigma_{\infty }$.
\end{lemma}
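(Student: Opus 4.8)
The claim is that $K$ coincides with the subgroup of $G$ acting trivially on $\Sigma_\infty$. One inclusion is immediate: each generator $\phi_s$ fixes $\Sigma_\infty$ pointwise and $K\leq H\leq G$, so $K$ certainly lies in the kernel of the action on $\Sigma_\infty$. The whole substance is therefore the converse, namely that every $g\in G$ fixing $\Sigma_\infty$ pointwise lies in $K$; I would fix such a $g$ and aim to show $g=\phi_s$ for some $s\in\GF{q}$. Since $g\in G\leq\PGL{4}{q^2}$ there is no field automorphism to worry about, and by Remark~\ref{remNot} it is represented by a matrix of the affine shape displayed there. The collineation $g$ induces on $\Sigma_\infty$ is the element of $\PGL{3}{q^2}$ given by the lower-right $3\times3$ block acting on the coordinates of the points $(0,X,Y,Z)$; as this induced map is the identity of $\PG(2,q^2)$, the block must be a nonzero scalar matrix $\mu I_3$, so that
\[
M=\begin{pmatrix}
1 & \beta_1 & \beta_2 & \beta_3\\
0 & \mu & 0 & 0\\
0 & 0 & \mu & 0\\
0 & 0 & 0 & \mu
\end{pmatrix},\qquad \mu\in\GF{q^2}^*.
\]

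It then remains to exploit $g(\cM_{a,b})=\cM_{a,b}$ to force $\mu=1$, $\beta_1=\beta_2=0$ and $\beta_3\in\GF{q}$. For this I would use the affine membership criterion that $(1,x,y,z)\in\cM_{a,b}$ if and only if $a(x^2+y^2)+b(x^{q+1}+y^{q+1})+z$ lies in $\GF{q}$, equivalently has vanishing relative trace, and restrict attention to the plane $Y=0$, exactly as in the proof of Lemma~\ref{fin}. Writing $g(1,x,0,z)=(1,\beta_1+\mu x,\beta_2,\beta_3+\mu z)$, expanding the membership condition of the image (here $q$ even makes squares split and forces $x^{q+1}\in\GF{q}$), and passing to relative traces, the invariance of $\cM_{a,b}$ becomes an equivalence of the form
\[
c_1(x)+\tr(z)=0\ \Longleftrightarrow\ c_2(x)+\tr(\mu z)=0\qquad\text{for all }x,z\in\GF{q^2},
\]
where $c_1,c_2$ are explicit $\GF{q}$-valued functions of $x$ alone.

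The decisive step is variable separation. For each fixed $x$ the dependence on $z$ sits entirely in the two $\GF{q}$-linear functionals $z\mapsto\tr(z)$ and $z\mapsto\tr(\mu z)$, whose level sets are cosets of size $q$; equality of the two solution sets forces the functionals to have the same kernel, and since in even characteristic $\ker\tr=\GF{q}$, this yields $\mu\in\GF{q}^*$. Feeding $\mu\in\GF{q}$ back in, the matching of the $x$-parts reduces to a polynomial identity in $x$ whose monomials lie among $1,x,x^q,x^2,x^{2q},x^{q+1}$; comparing coefficients and using $a\neq0$ together with $\tr(b)=b+b^q\neq0$ (because $b\notin\GF{q}$) forces $\mu=1$ and $\beta_1=0$. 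The identical computation on the plane $X=0$ gives $\beta_2=0$, and then the image of the origin $O=(1,0,0,0)$, namely $g(O)=(1,0,0,\beta_3)$, must lie in $\cM_{a,b}$, whence $\beta_3\in\GF{q}$. Thus $g=\phi_{\beta_3}\in K$, completing the proof.

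I expect the main obstacle to be precisely this separation-and-comparison step: isolating the $z$-dependence cleanly enough to conclude $\mu\in\GF{q}$, and then reading off coefficients without exponent collisions. The only point needing extra care is the smallest field $q=2$, where $x^{2q}=x^{4}\equiv x$ modulo $x^{q^2}-x$, so the monomials must be reduced before the coefficients are compared; one checks directly that the conclusions $\mu=1$ and $\beta_1=0$ nevertheless survive, so the argument is uniform in even $q$.
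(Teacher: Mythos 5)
Your proof is correct and takes essentially the same approach as the paper's: both first use the affine normal form of Remark~\ref{remNot} to see that an element acting trivially on $\Sigma_{\infty}$ is represented by a matrix with scalar lower block $\mu I_3$ and translation row $(\beta_1,\beta_2,\beta_3)$, and then eliminate the parameters by imposing invariance of $\cM_{a,b}$ at suitable affine points. The only divergence is tactical — the paper substitutes diagonal points $(1,\theta,\theta,\lambda)$ and then pairs of points with distinct traces of the last coordinate, while you use the coordinate planes $Y=0$ and $X=0$, the coset argument via $\ker\tr=\GF{q}$ to get $\mu\in\GF{q}^{*}$, and a coefficient comparison (with the $q=2$ monomial collision correctly handled) — and both routes arrive at $\mu=1$, $\beta_1=\beta_2=0$, $\beta_3\in\GF{q}$, i.e.\ at an element of $K$.
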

\begin{proof}
	The point-wise stabilizer $N$ in $\PGL{4}{q^2}$ of $\Sigma_{\infty }$
	consists of the elations represented by matrices of the
	form%
	\[
	\left( \allowbreak
	\begin{array}{cccc}
		1 & \fB & \fC & \fD \\
		0 & \fA & 0 & 0 \\
		0 & 0 & \fA & 0 \\
		0 & 0 & 0 & \fA%
	\end{array}%
	\right)
	\]%
	with $\fA\neq 0$. Clearly $K\leq N\cap G$. Let $\alpha \in N$ and for each $%
	\theta \in\GF{q^{2}}$ and $\lambda \in\GF{q}$ consider the point $P_{\theta
		,\lambda }=(1,\theta ,\theta ,\lambda )$ in $\cM_{a,b}\setminus
	\Sigma_{\infty }$. Then $P_{\theta ,\lambda }^{\alpha }=(1,\fB+\fA\theta ,\fC+\fA \theta
	,\fA\lambda +\fD)$ which lies in $\cM_{a,b}\setminus \Sigma_{\infty }$ if and
	only if
	\begin{multline*}
		(\fA\lambda +\fD)^{q}+(\fA\lambda
		+\fD)+a^{q}(\fB^{2q}+\fC^{2q})+a(\fB^{2}+\fC^{2})=\\
		(b^{q}+b)(\fB^{q+1}+\fC^{q+1})+(b^{q}+b)(\fB+\fC)\fA^{q}\theta ^{q}+(b^{q}+b)(\fB+\fC)^{q}\fA\theta
	\end{multline*}
	is satisfied for each $\theta \in\GF{q^{2}}$ and $\lambda \in\GF{q}$. Thus $\fB=\fC$ and $%
	\fA,\fD\in\GF{q}$ since $\fA\neq 0$.
	
	{\normalsize Now, let $Q=(1,x_{0},y_{0},z_{0})$ in $\cM_{a,b}\setminus \Sigma
		_{\infty }$, then%
		\[
		z_{0}^{q}+z_{0}+a^{q}(x_{0}^{2q}+y_{0}^{2q})+a(x_{0}^{2}+y_{0}^{2})=(b^{q}+b)(x_{0}^{q+1}+y_{0}^{q+1})%
		\mathit{.}
		\]%
		Now, $Q^{\alpha }=(1,\fB+\fA x_{0},\fB+\fA y_{0},\fD+\fA z_{0})$ lies in in $%
		\cM_{a,b}\setminus \Sigma_{\infty }$ if and only if%
		\begin{multline*}
			\fA \tr(z_{0})+\fA^{2}a^{q}(x_{0}^{2q}+y_{0}^{2q})+\fA^{2}a(x_{0}^{2}+y_{0}^{2})=
			\\ \fA^{2}(b^{q}+b)(x_{0}^{q+1}+y_{0}^{q+1})+(b^{q}+b)\fA \tr((x_{0}+y_{0})\fB^{q})
		\end{multline*}
		and hence %
		\begin{equation}
		    \label{lakunoc}
		(\fA+1)\tr(z_{0})=(b^{q}+b)\tr((x_{0}+y_{0})\fB^{q})
		\end{equation}
		since $Q\in \cM_{a,b}\setminus \Sigma_{\infty }$. We may repeat the previous
		argument by choosing two distinct points $Q_{i}=(1,x_{i},x_{i}+\mu ,z_{i})$
		with $\tr(x_{1})\neq \tr(x_{2})$, $\mu \in\GF{q}^{\ast }$ and $z_{i}$ such that $%
		\tr(z_{i})=\mu ^{2}\tr(a)+\tr(b)(\tr(x_{i})\mu+\mu^{2}) $. Therefore, $\tr(z_{1})\neq \tr(z_{2})$ since $b \notin \GF{q}$,
		and $Q_{1},Q_{2}\in \cM_{a,b}\setminus \Sigma_{\infty }$. Since $Q_{i}^{\alpha
		}\in \cM_{a,b}\setminus \Sigma_{\infty }$, we may argue as above with $Q_{i}$ in the role of $Q$ and hence ((\ref{lakunoc})) becomes%
		\[
		(\fA+1)\tr(z_{i})=(b^{q}+b)\mu \tr(\fB^{q}) \textit{,}
		\]%
		which leads to $\fA=1$ and $\fB\in\GF{q}$. Now, let $R=(1,x_{0},y_{0},z_{0})$ with $%
		x_{0}+y_{0}\notin\GF{q}$. Then $R^{\alpha }\in \cM_{a,b}\setminus \Sigma
		_{\infty }$ implies $\fB \tr(x_{0}+y_{0})=0$. Therefore $\fB=0$ as $%
		x_{0}+y_{0}\notin\GF{q}$. Thus $G\cap N=K$, which is the assertion. }
\end{proof}

\bigskip

\begin{proposition}
\label{p:sylow}
$S:U$ is a Sylow $p$-subgroup of $G$.
\end{proposition}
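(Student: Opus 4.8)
The plan is to identify $S:U$ as a $2$-subgroup of $G$ of order $q^{6}$ and then to show that $q^{6}$ already exhausts the $2$-part of $|G|$; since $q$ is a power of $2$ (so here $p=2$), $q^{6}$ is a power of $2$ and these two facts together force $S:U$ to be a Sylow $2$-subgroup. Concretely I will prove two inequalities: on one hand $q^{6}\mid |G|$ (hence $q^{6}\mid |G|_{2}$) by exhibiting $S:U$ as a genuine subgroup of order $q^{6}$; on the other hand $|G|_{2}\le q^{6}$ by using the embedding $\bar G\le \bar S:(C_{q+1}\times\GL{2}{q})$. Combining them gives $|G|_{2}=q^{6}=|S:U|$.

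For the first inequality I first record that $|S|=q^{5}$, since $S$ acts regularly on the $q^{5}$ points of $\cM_{a,b}\setminus\Sigma_{\infty}$. Next I show $S\trianglelefteq G$. Let $K$ be the kernel of the action of $G$ on $\Sigma_{\infty}$ (Lemma~\ref{ker}); then $K\le S$ and $S$ maps onto $\bar S$ with kernel $K$, so $|S|=|K|\,|\bar S|=q\cdot q^{4}$ shows that $S$ is exactly the full preimage of $\bar S$ under $G\to\bar G=G/K$. Because every element of $G$ fixes $P_{\infty}$ (Lemma~\ref{autaut0}), conjugation in $\bar G$ carries an elation of $\Sigma_{\infty}$ with centre $P_{\infty}$ to another such elation; as $\bar S$ consists of \emph{all} elations of $\Sigma_{\infty}$ with centre $P_{\infty}$, it is normalised by $\bar G$, i.e.\ $\bar S\trianglelefteq\bar G$, and pulling back gives $S\trianglelefteq G$.

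I then check $U\cap S=1$. Since $U$ preserves $\cM_{a,b}$ it lies in $G$, so $SU$ is a subgroup with $S\trianglelefteq SU$. For $e\neq0$ the induced map $\bar\tau_{e}$ is a nontrivial central collineation of $\Sigma_{\infty}$ with centre $V_{\infty}=(0,1,1,0)$, while every nontrivial element of $\bar S$ has centre $P_{\infty}\neq V_{\infty}$; as a nontrivial central collineation has a unique centre, $\bar\tau_{e}\notin\bar S$ for $e\neq0$. Since $S$ is the preimage of $\bar S$, this yields $\tau_{e}\in S\Rightarrow e=0$, so $U\cap S=1$ and $SU=S:U$ has order $|S|\,|U|=q^{5}\cdot q=q^{6}$. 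Being of order $q^{6}=2^{6h}$ it is a $2$-group contained in $G$, whence $q^{6}\mid|G|_{2}$.

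For the second inequality I write $|G|=|K|\,|\bar G|=q\,|\bar G|$ and use $\bar G\le\bar S:(C_{q+1}\times\GL{2}{q})$. Here $|\bar S:(C_{q+1}\times\GL{2}{q})|=q^{4}\cdot(q+1)\cdot q(q+1)(q-1)^{2}=q^{5}(q+1)^{2}(q-1)^{2}$, whose $2$-part is $q^{5}$ because $q$ even makes both $q+1$ and $q-1$ odd; hence $|\bar G|_{2}\le q^{5}$ and $|G|_{2}\le q\cdot q^{5}=q^{6}$. Together with $q^{6}\mid|G|_{2}$ this forces $|G|_{2}=q^{6}=|S:U|$, so $S:U$ is a Sylow $p$-subgroup of $G$. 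The main obstacle is the first inequality, namely certifying that $S:U$ is a genuine subgroup of the exact order $q^{6}$: this rests on the two structural facts $S\trianglelefteq G$ (realising $S$ as the preimage of the centre-$P_{\infty}$ elation group $\bar S\trianglelefteq\bar G$) and $U\cap S=1$ (separating $\bar\tau_{e}$ from $\bar S$ by the distinct centres $V_{\infty}\neq P_{\infty}$); once the order $q^{6}$ is secured, the Sylow conclusion is a short order computation.
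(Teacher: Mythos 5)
Your proof is correct, but it takes a genuinely different route from the paper's. The paper works from the inside out: it first verifies the commutation relations $\tau_{e}\phi_{s}=\phi_{s}\tau_{e}$ and $\tau_{e}^{-1}\psi_{\gamma}(a,b)\tau_{e}=\psi_{\gamma'}(a,b)$ to see that $S:U$ is a $2$-group, hence lies in some Sylow $2$-subgroup $W$ of $G$; then it uses the transitivity of $S$ on $\cM_{a,b}\setminus\Sigma_{\infty}$ to factor $W=SW_{O}$ with $O=(1,0,0,0)$, and finally pins down $W_{O}$ by an explicit matrix computation, substituting families of points such as $(1,\omega^{i},\omega^{i},1)$ and $(1,\omega^{i},\lambda-\omega^{i},z_{\lambda})$ into the defining equation to force the stabilizer's matrix entries to vanish, giving $W_{O}\le U$ and hence $W=S:U$; at no point does it compute $|G|_{2}$. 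You instead count: you certify $|SU|=q^{6}$ via $S\trianglelefteq G$ (recognizing $S$ as the full preimage of the full centre-$P_{\infty}$ elation group $\bar S$, which is normalized by $\bar G$ because $G$ fixes $P_{\infty}$ by Lemma~\ref{autaut0}) and $U\cap S=1$ (by comparing centres of the induced central collineations --- and note your phrasing ``central collineation'' is actually more accurate than the paper's ``homology,'' since $V_{\infty}\in\ell_{\infty}$ in characteristic $2$, so the $\bar\tau_{e}$ are elations); then you cap $|G|_{2}\le q\cdot q^{5}$ using Lemma~\ref{ker} together with the containment $\bar G\le\bar S:\left(C_{q+1}\times\GL{2}{q}\right)$, whose order $q^{5}(q+1)^{2}(q-1)^{2}$ has $2$-part $q^{5}$ for $q$ even. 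Both arguments are sound, and the facts you cite ($|S|=q^{5}$ from the regular action, $\bar S$ being the full elation group of centre $P_{\infty}$, the containment for $\bar G$) are all stated in the paper. Your approach buys a cleaner, computation-free argument and two byproducts the paper only asserts or obtains later: $S\trianglelefteq G$ (invoked without separate proof in Proposition~\ref{WnormG}) and the exact value $|G|_{2}=q^{6}$. The trade-offs: you lean on the containment $\bar G\le\bar S:\left(C_{q+1}\times\GL{2}{q}\right)$, which the paper states without proof and its own proof of this proposition does not need, whereas the paper's stabilizer computation explicitly identifies $(S{:}U)_{O}=U$, structural information that is reused in the subsequent analysis of $G$.
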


\begin{proof}
	Clearly, $U\cap S=1$. It is easy to check that
	\begin{eqnarray*}
		\tau_{e}\phi_{s} &=&\phi_{s}\tau_{e} \\
		\tau_{e}^{-1}\psi_{\gamma }(a,b)\tau_{e} &=&\psi_{\gamma ^{\prime }}(a,b)\textit{,
			where }\gamma ^{\prime }=(\gamma_{1}+\left( \gamma_{1}+\gamma_{2}\right)
		e,\gamma_{2}+\left( \gamma_{1}+\gamma_{2}\right) e)
	\end{eqnarray*}%
	and hence $S:U$ lies in a Sylow $p$-subgroup $W$ of $G$. Note that $%
	W=SW_{O} $, where $O=(1,0,0,0)$, since $S$ acts transitively on $%
	\cM_{a,b}\setminus \Sigma_{\infty }$. Let $\alpha \in W_{O}$, $\alpha $
	is represented by
	\[
	\left(
	\begin{array}{cccc}
		1 & 0 & 0 & 0 \\
		0 & \fF+1 & \fF & \fG+\fE \\
		0 & \fF & \fF+1 &\fE \\
		0 & 0 & 0 & 1%
	\end{array}%
	\right)
	\]%
	for suitable $\fE,\fF,\fG\in\GF{q^2}$ as a consequence of Lemma \ref{autaut0}. Let $(1,x_{0},y_{0},z_{0})\in
	\cM_{a,b}\setminus \Sigma_{\infty }$, then%
	\[
	(1,x_{0},y_{0},z_{0})^{\alpha
	}=(1,x_{0}+\fF(x_{0}+y_{0}),y_{0}+\fF(x_{0}+y_{0}),z_{0}+\fE(x_{0}+y_{0})+\fG x_{0})
	\]%
	which actually lies in $\cM_{a,b}$ if and only if
	\[
	\tr\left( \fE(x_{0}+y_{0})+\fG x_{0}\right) =(b^{q}+b)\left[
	\tr(x_{0}^{q}\fF(x_{0}+y_{0}))+\tr(y_{0}^{q}\fF(x_{0}+y_{0}))\right]
	\]%
	and hence
	\[
	\tr\left( \fE(x_{0}+y_{0})+\fG x_{0}\right) =(b^{q}+b)(x_{0}+y_{0})^{q+1}\tr(\fF)
	\]%
	Since $(1,\omega ^{i},\omega ^{i},1)\in \cM_{a,b}$ for each $i=0,\dots,q^{2}-2$ given $\omega$ a primitive element of $\GF{q^2}$,
	it follows that $(1,\omega ^{i},\omega ^{i},1)^{\alpha}\in \cM_{a,b}$ if and
	only if $\tr(\fG\omega ^{i})=0$, hence $\fG=0$.
	
	Let $\lambda \in\GF{q^2}$ and $z_{\lambda }\in\GF{q^2}$ such that $%
	\tr(z_{\lambda })=\tr(a\lambda ^{2} )+(b^{q}+b)(\omega ^{i(q+1)}+(\lambda -\omega
	^{i})^{q+1})$, then $(1,\omega ^{i},\lambda -\omega ^{i},z_{\lambda })\in
	\cM_{a,b}$ and so $(1,\omega ^{i},\lambda -\omega ^{i},z_{\lambda })^{\alpha
	}\in \cM_{a,b}$ if and only if $\tr\left( \lambda \fE\right) =(b^{q}+b)\lambda
	^{q+1}\tr(\fF)$. Now, choosing distinct $\lambda $'s in $\GF{q}^{\ast }$ we
	obtain $\tr\left( \fE\right) =\tr(\fF)=0$ and so $\fE,\fF\in\GF{q}$. Therefore, $%
	\fE \tr(x_{0}+y_{0})=0$. Finally, choosing $\lambda $ in $\GF{q^{2}}\setminus\GF{q}$, we
	obtain $\fE=0$. Thus $\alpha \in U$ and so $W=S:U$.
\end{proof}

\begin{proposition}
  \label{WnormG}Let $W=S:U$, then $W\vartriangleleft G$.
\end{proposition}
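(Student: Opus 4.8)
The plan is to exploit the factorization $G=S\,G_O$, where $G_O$ is the stabilizer of $O=(1,0,0,0)$ in $G$, and then to split the normality of $W=SU$ into two independent and more tractable assertions. Since $S$ acts regularly on the $q^5$ affine points of $\cM_{a,b}$ and, by \cite{ACK}, $G$ is transitive on them while preserving the affine part (Lemma~\ref{autaut0}), every $g\in G$ factors as $g=s\,g_0$ with $s\in S$, $g_0\in G_O$. Granting that \textbf{(a)} $S\vartriangleleft G$ and \textbf{(b)} $U\vartriangleleft G_O$, normality of $W$ follows formally: conjugation is an automorphism, so $g_0Wg_0^{-1}=(g_0Sg_0^{-1})(g_0Ug_0^{-1})=SU=W$ by (a)--(b), and then $gWg^{-1}=s\,W\,s^{-1}=W$ because $s\in S\subseteq W$ and $W$ is a subgroup. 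I would first record this reduction, together with the observation that $U=W\cap G_O$ is the Sylow $2$-subgroup of $G_O$ (from Proposition~\ref{p:sylow}).

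For (a) I would pass to the quotient. Let $\bar G=G/K$ act faithfully on $\Sigma_\infty$, where $K$ is the kernel of this action by Lemma~\ref{ker}; since $K\le S$, the subgroup $S$ is exactly the full preimage in $G$ of its image $\bar S\le\bar G$. Now $\bar S$ is the group of elations of $\Sigma_\infty$ with centre $P_\infty$, i.e.\ the normal elementary abelian factor of the cone stabilizer $\bar S:(C_{q+1}\times\GL{2}{q})$, which contains $\bar G$. Hence $\bar S\vartriangleleft\bar G$, and pulling back along $G\to\bar G$ gives $S\vartriangleleft G$.

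Assertion (b) is where the real work lies, and I expect it to be the main obstacle. By Lemma~\ref{fin} (with $a'=a$, $b'=b$, $\sigma=\mathrm{id}$, as $G\le\PGL{4}{q^2}$) every $M\in G_O$ is block diagonal, $M=\mathrm{diag}(1,B,c)$ with $c\in\GF{q}^{*}$ and $B=\left(\begin{smallmatrix}d&e\\ \lambda_1 e&\lambda_2 d\end{smallmatrix}\right)$ as in cases \ref{I}--\ref{IV}, whereas $\tau_f$ carries the single nonzero off-diagonal block $N=\left(\begin{smallmatrix}1&1\\1&1\end{smallmatrix}\right)$ in the $(X,Y)$ position, so $\tau_f=I+fE$ with $E^2=0$. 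Thus $M\tau_fM^{-1}=I+f\,BNB^{-1}$, and the decisive point is that $(1,1)^{t}$ is simultaneously a right and a left eigenvector of $B$: the required relation $d(1+\lambda_2)=e(1+\lambda_1)$ is precisely what holds in each of the cases \ref{I}--\ref{IV} (it is forced to $0=0$ in characteristic $2$ for \ref{I}--\ref{III}, and it is the defining identity in \ref{IV}). Consequently $BNB^{-1}=\rho N$ with $\rho=(d+e)/(d+\lambda_1 e)$, whence $M\tau_fM^{-1}=\tau_{f\rho}$. It then remains to verify $\rho\in\GF{q}$, so that $\tau_{f\rho}\in U$: this is immediate ($\rho=1$) in \ref{I}--\ref{III}, while in \ref{IV} a short simplification yields $\rho=(\lambda_1+\lambda_2)/(1+\lambda_1\lambda_2)$, which satisfies $\rho^q=\rho$ because $\lambda_1^{q+1}=\lambda_2^{q+1}=1$ forces $\lambda_i^{q}=\lambda_i^{-1}$. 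Hence $MUM^{-1}=U$ for all $M\in G_O$, giving $U\vartriangleleft G_O$ and, with (a), the proposition.
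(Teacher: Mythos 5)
Your proof is correct, and it takes a genuinely different route from the paper's. The paper argues by contradiction via Sylow theory: assuming $W^{g}\neq W$, it notes $S\leq W^{g}$ (in effect your step (a): $S$ is the full preimage of the elation group $\bar{S}$, which is normal in the cone stabilizer $\bar{S}:(C_{q+1}\times \GL{2}{q})$ containing $\bar{G}$), deduces that the group induced on the Hermitian cone by $\langle W,W^{g}\rangle$ contains two distinct Sylow $2$-subgroups and hence a copy of $SL_{2}(q)$, conjugates so as to obtain in $G$ a torus element $\zeta$ represented by $\mathrm{Diag}(1,\omega^{q-1},\omega^{1-q},1)$ fixing $O$, and refutes its existence by evaluating on points $(1,\theta,\theta,\lambda)$, which forces $\tr(a\theta^{2})=0$ for all $\theta$. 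You instead prove normality directly, reducing via $G=S\,G_{O}$ and $S\trianglelefteq G$ to the assertion $U\trianglelefteq G_{O}$, which you settle by the rank-one computation $\tau_{f}=I+fE$ with $E$ built from $N=vv^{t}$, $v=(1,1)^{t}$: the identity $d(1+\lambda_{2})=e(1+\lambda_{1})$, valid in all cases \ref{I}--\ref{IV}, makes $v$ a simultaneous left/right eigenvector of the middle block, so $M\tau_{f}M^{-1}=\tau_{f\rho}$ with $\rho\in\GF{q}^{*}$ (your formula $\rho=(\lambda_1+\lambda_2)/(1+\lambda_1\lambda_2)$ checks out, with $1+\lambda_1\lambda_2\neq 0$ guaranteed by nonsingularity of $M$). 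One caveat of citation rather than substance: the statement of Lemma~\ref{fin} is existential, so with $(a',b')=(a,b)$, $\sigma=\mathrm{id}$ it does not by itself classify $G_{O}$; what you need --- that \emph{every} $O$-fixing linear collineation stabilizing $\cM_{a,b}$ has the block-diagonal form with one of \ref{I}--\ref{IV} --- is exactly what the computations in the \emph{proof} of Lemma~\ref{fin} establish, since they treat an arbitrary $O$-fixing map. Your approach buys a short, explicit, self-contained verification (including the precise conjugation action of $G_{O}$ on $U$) at the price of invoking the full Section~4 classification of the point stabilizer; the paper's proof needs only the coarse embedding $\bar{G}\leq\bar{S}:(C_{q+1}\times\GL{2}{q})$, at the price of a subtler group-theoretic contradiction argument.
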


\begin{proof}
  Assume that $W$ is not normal in $G$. Then there
  is $g\in G$ such that $%
  W^{g}\neq W$. Nevertheless, $S\leq W^{g}$ since $K\leq S$,
  $K\vartriangleleft G$ by Lemma \ref{ker}, and since $S/K\cong E_{q^{4}}$
  induces the (full) elation group of center $%
  P_{\infty }$ on $\Sigma_{\infty }$. Thus the group induced by
  $\left\langle W^{g},W\right\rangle $ on the Hermitian cone
  $\cM_{a,b}\cap \Sigma_{\infty }$ contains two distinct Sylow
  $p$-subgroups and hence contains $SL_{2}(q)$ since
  $\bar{G}\leq \bar{S}:\left( C_{q+1}\times
    \GL{2}{q}\right) $ and $q>2$, where $\bar{G}$ is the the group induced on $\cM_{a,b}\cap \Sigma_{\infty }$ by $G$. Let $R$ be subgroup of
  $\left\langle W^{g},W\right\rangle $ inducing a cyclic subgroup of
  $SL_{2}(q)$ of order $q+1$. Note that $R$ can be
  chosen in a way that $R\cap S=1$ since $S$ is a $p$-group. Then $R$
  fixes a point $\cM_{a,b}\setminus \Sigma_{\infty }$ since
  $\left\vert \cM_{a,b}\setminus \Sigma_{\infty }\right\vert =q^{5}$
  and permutes regularly the $q+1$ lines of the Hermitian cone. Since
  $S\vartriangleleft G$ and $S$ acts transitively on
  $\cM_{a,b}\setminus \Sigma_{\infty }$, possibly substituting $R$
  with a suitable conjugate in $S:R$, we may assume that that $R$
  fixes $O=(1,0,0,0)$. Now, also $U$ fixes $O$ thus
  $\left\langle U,R\right\rangle $ fixes $O$ and hence
  $\left\langle U,R\right\rangle \cap S=1$. Thus
  $\left\langle U,R\right\rangle $ acts faithfully on the Hermitian
  cone and contains subgroups of order $q$ and $q+1$, and the last one
  acts transitively on the lines of the Hermitian cone. Thus
  $\left\langle U,R\right\rangle $ contains a copy of $SL(2,q)$. Then
  $\left\langle U,R\right\rangle $ contains a conjugate of $R$, say
  $\left\langle \zeta \right\rangle $ with $\zeta $ represented by the
  matrix $Diag(1,\omega ^{q-1},\omega ^{1-q},1)$ with $\omega$ a primitive element of $\GF{q^2}^{\ast}$.
	
  Let $P_{\theta ,\lambda }=(1,\theta ,\theta ,\lambda )$ with
  $\theta \in\GF{q^{2}}$ such that $\tr(a\theta ^{2})\neq 0$ and
  $\lambda \in\GF{q}$. Then $%
  P_{\theta ,\lambda }$ lies in $\cM_{a,b}\setminus \Sigma_{\infty }$
  and hence
  $P_{\theta ,\lambda }^{\zeta }=(1,\omega ^{q-1}\theta ,\omega
  ^{1-q}\theta ,\lambda )$ must lie in
  $\cM_{a,b}\setminus \Sigma_{\infty }$.

  Thus
  \begin{align*}
      \tr(a\theta^2(\omega^{2(q-1)}+\omega^{2(1-q)}))&=0\\
      \tr\left(a\theta^2\frac{\omega^{4q}+\omega^4}{\omega^{2(q+1)}} \right)=\tr(a\theta^2)\frac{\tr(\omega^4)}{\nr(\omega^2)}&=0
  \end{align*}
  and so $\tr(a\theta^2)=0$ because $\tr(\omega^4)\neq 0$. Thus $W$ is normal in $G$.
\end{proof}

\medskip

\begin{theorem}
  \label{LinColl}$G=\left\langle
    \phi_{s},\psi_{\gamma }(a,b),\tau_{e},\mu _{\delta }:\gamma
    \in\GF{q^2}^{2}\mathit{, }s,e,\delta \in\GF{q}\mathit{, }%
    \delta \neq 0\right\rangle $. It has order $q^{6}(q-1)$.
\end{theorem}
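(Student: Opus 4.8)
The plan is to determine $|G|$ by combining the transitivity of $S$ on the affine part of $\cM_{a,b}$ with an explicit computation of a point stabiliser, and then read off the generation statement. By Proposition~\ref{p:sylow} the group $W=S:U$ is a Sylow $p$-subgroup of $G$; since $S$ is regular on the $q^5$ points of $\cM_{a,b}\setminus\Sigma_{\infty}$ we have $|S|=q^5$, and as $|U|=q$ we get $|W|=q^6$. The group $D=\{\mu_\delta:\delta\in\GF{q}^\ast\}$ is cyclic of order $q-1$, hence of odd order, so $D\cap W=1$, and (using $W\vartriangleleft G$ from Proposition~\ref{WnormG}) the product $WD=\langle\phi_s,\psi_\gamma(a,b),\tau_e,\mu_\delta\rangle$ is a subgroup of order $q^6(q-1)$. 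Thus the whole statement reduces to proving the single equality $|G|=q^6(q-1)$: the generation claim follows at once, because $WD$ has that order and is contained in $G$.

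To compute $|G|$ I would use that $S\le G$ acts regularly on the $q^5$ affine points of $\cM_{a,b}$; hence $G$ is transitive there, $S\cap G_O=1$, and $G=S\,G_O$, where $G_O$ is the stabiliser of $O=(1,0,0,0)$. Consequently $|G|=q^5|G_O|$, so it suffices to show $|G_O|=q(q-1)$. For the lower bound, note that $U$ and $D$ both fix $O$, and that $\mu_\delta$ centralises every $\tau_e$, since the scalar block $\operatorname{diag}(\delta,\delta)$ commutes with the symmetric central block of $\tau_e$; therefore $\langle U,\mu_\delta\rangle=U\times D$ has order $q(q-1)$ and lies in $G_O$.

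The heart of the argument, and where I expect the real work, is the matching upper bound $|G_O|\le q(q-1)$. Any $\alpha\in G_O$ is a collineation of $\cM_{a,b}$ fixing $O$ with trivial field automorphism, so Lemma~\ref{fin} and Lemma~\ref{nolinear} (specialised to $\sigma=\mathrm{id}$, $a'=a$, $b'=b$) apply. Writing $\alpha$ by the matrix of Lemma~\ref{fin}, the relations of Lemma~\ref{nolinear} force $d^2+e^2=c=d^{q+1}+e^{q+1}$, and the symmetric analysis on the plane $X=0$ forces $f^2+g^2=c=f^{q+1}+g^{q+1}$. From $c\in\GF{q}^\ast$ and $d^2+e^2=(d+e)^2$ one gets $s:=d+e\in\GF{q}^\ast$; substituting $e=d+s$ into $d^{q+1}+e^{q+1}=s^2$ yields $s\,\tr(d)=0$, whence $\tr(d)=0$ and $d,e\in\GF{q}$. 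The same computation gives $f,g\in\GF{q}$, which rules out case~\ref{IV} of Lemma~\ref{fin} (there $f=\lambda_1 e$, $g=\lambda_2 d$ with $\lambda_1,\lambda_2$ of norm $1$ and distinct from $1$, and $f,g\in\GF{q}$ forces $\lambda_1=\lambda_2=1$). Hence every $\alpha\in G_O$ has central block $\left(\begin{smallmatrix}d&e\\ e&d\end{smallmatrix}\right)$ with $d,e\in\GF{q}$ and $d+e\neq0$, and last entry $(d+e)^2$; counting these gives exactly $q^2-q=q(q-1)$ elements, so $|G_O|=q(q-1)$ and in fact $G_O=U\times D$.

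Putting the pieces together, $|G|=q^5\cdot q(q-1)=q^6(q-1)$, and since $G=S\,G_O=S\,(U\times D)=(S:U)\,D=WD$ we conclude $G=\langle\phi_s,\psi_\gamma(a,b),\tau_e,\mu_\delta\rangle$, as asserted. The main obstacle is the trace computation of the previous paragraph: everything hinges on showing that the ``twisted'' automorphisms of case~\ref{IV} cannot occur, that is, that the norm and trace conditions coming from the two coordinate planes $Y=0$ and $X=0$ pin all four entries $d,e,f,g$ down to the subfield $\GF{q}$. Once this is secured, the count of $G_O$, and hence the order of $G$, is immediate.
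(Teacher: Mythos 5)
Your proposal is correct, but it takes a genuinely different route from the paper's proof. The paper argues by contradiction on the action induced at infinity: it supposes an odd-order $\varrho\in G$ whose image $\bar{\varrho}$ lies outside $E_{q^{4}}:\left(C_{q-1}\times E_{q}\right)$, reduces $\varrho$ (multiplying by elements of $D$ and using the transitivity properties of $S/K$) to a $(Q,\ell_{\infty})$-homology of $\Sigma_{\infty}$ with $Q=(0,1,w,0)$, $\nr(w)=1$, and then pins its matrix down through a long explicit computation involving the cyclic subgroup $\{\alpha_c\}$ of $\PGL{2}{q^2}$ fixing $1,w$ and preserving the Baer subline $\{y:\nr(y)=1\}$, until successive multiplications by $\mu_{\fP^{-1/2}}$, $\tau_{\frac{c+1}{w+w^q}}$, $\phi_{\fD/d}$ force $\varrho''=1$ and hence $\varrho\in\langle W,D\rangle$, a contradiction; this leans on Lemma~\ref{ker} and Propositions~\ref{p:sylow} and~\ref{WnormG} and on the embedding $\bar{G}\leq\bar{S}:(C_{q+1}\times\GL{2}{q})$. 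You instead compute the point stabiliser directly: $|G|=q^{5}|G_O|$ by regularity of $S$, the lower bound $U\times D\leq G_O$ (your centralising check is right), and the upper bound by recycling the equivalence machinery of Section~\ref{s:eqiv} specialised to $a'=a$, $b'=b$, $\sigma=\mathrm{id}$. Your key computation is sound: $d^2+e^2=c=d^{q+1}+e^{q+1}$ gives $s=d+e\in\GF{q}^{*}$, and $d^{q+1}+(d+s)^{q+1}=s\,\tr(d)+s^{2}$ forces $\tr(d)=0$, which in even characteristic means $d\in\GF{q}$ (the kernel of $\tr$ is exactly $\GF{q}$); likewise $f,g\in\GF{q}$, and then in case~\ref{IV} one would get $\lambda_1\in\GF{q}$ with $\nr(\lambda_1)=\lambda_1^{2}=1$, i.e.\ $\lambda_1=1$, a contradiction, so $G_O$ sits inside the set of $q(q-1)$ matrices realised by $U\times D$ and equals it. Your route is markedly shorter and, notably, makes Propositions~\ref{p:sylow} and~\ref{WnormG} dispensable for this theorem (the set identity $G=S\,G_O$ needs no normality), while the paper's argument is independent of Lemma~\ref{fin} and yields structural by-products about odd-order elements and the triangle stabiliser at infinity.

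One presentational caveat you should address: Lemma~\ref{fin} as stated is an existence statement (``...if and only if \emph{there is} a collineation of the displayed form...''), and Lemma~\ref{nolinear} is stated under the hypothesis $(a',b')\neq(a,b)$, so neither statement literally covers an arbitrary $\alpha\in G_O$. What you actually need --- that \emph{every} collineation $\cM_{a,b}\to\cM_{a',b'}$ fixing $O=(1,0,0,0)$ has the displayed matrix shape and satisfies \eqref{fin1}, \eqref{fin2}, their $X=0$ analogues, and one of \ref{I}--\ref{IV} --- is exactly what the proof of Lemma~\ref{fin} establishes after its w.l.o.g.\ reduction. So cite the proof (or restate the lemma in ``for all collineations fixing $O$'' form) rather than the bare statements; with that adjustment your argument is complete.
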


\begin{proof}
  First, we observe that
  $E_{q^{4}}:\left( C_{q-1}\times E_{q}\right) =\left\langle
    W,D\right\rangle /K\leq G/K$ and hence
  $E_{q^{4}}:\left( C_{q-1}\times E_{q}\right) \trianglelefteq
  G_{\Sigma_{\infty }}/K\trianglelefteq E_{q^{4}}:\left(
    C_{q-1}.\left( E_{q}:C_{q-1}\right) \right) $.
	
  Assume that there is an element of odd order $\varrho \ $in
  $G$ such that
  $\bar{\varrho}\notin E_{q^{4}}:\left( C_{q-1}\times E_{q}\right)
  $. Then $\bar{\varrho}$ preserves $\ell_{\infty }$ and fixes
  $P_{\infty }$, and two further points since $o(\bar{\varrho%
  })\mid q-1$, namely one on
  $\ell_{\infty }\setminus \left\{ P_{\infty }\right\} $ and the other
  on
  $\left( \cM_{a,b}\cap \Sigma _{\infty }\right) \setminus
  \ell_{\infty }$. Recall that $S/K$ is the group of $(P_{\infty},P_{\infty})$-elations of $\Sigma_{\infty}$, then it acts transitively on $\ell_{\infty } \setminus \{P_{\infty }\}$, and hence we may assume that $\bar{\varrho}$ fixes the point $\{V_{\infty}\}=\ell_{\infty } \cap m_{\infty}$, where $m_{\infty}$ is the line $J=Z=0$. Moreover, the stabilizer in $S/K$ of $V_{\infty}$ acts regularly on the set $q^{2}$ lines of $\Sigma_{\infty}$ which are incident with $V_{\infty}$ and are distinct from $\ell_{\infty}$, thus we may also assume that $\bar{\varrho}$ preserves $m_{\infty}$. Therefore, $\left\langle D/K,\bar{%
      \varrho}\right\rangle $ is a subgroup of the stabilizer of a
  triangle in $\Sigma_{\infty }$ with $P_{\infty }$ and $V_{\infty}$ as two of its vertices and $\ell_{\infty}$ and $m_{\infty}$ as two of its sides. Actually,
  $\left\langle D/K,\bar{\varrho}%
  \right\rangle \leq C_{q-1}\times C_{q-1}$, where the group
  $C_{q-1}\times C_{q-1}$ is generated by cyclic subgroup of
  homologies in a triangular configuration. Since $D/K$ is a cyclic
  group of order $q-1$ consisting of $(P_{\infty },m_{\infty })$-homologies of
  $\Sigma _{\infty }$, by suitably multiplying $\bar{\varrho}$ with an
  element of $D/K$ we may assume that $%
  \bar{\varrho}$ is a $(Q,\ell_{\infty })$-homology of
  $\Sigma_{\infty }$, where $Q=(0,1,w,0)$ with $w$ a fixed element of $\GF{q^{2}} \setminus \{1\}$
  such that $\nr(w)=1$. Thus, $\varrho $ fixes $\ell_{\infty }$
  point-wise, fixes $Q$ and preserves the points on the Hermitian
  cone $\cM_{a,b}\cap \Sigma_{\infty } $ with apex $P_{\infty }$ and
  $\cM_{a,b}\setminus \Sigma_{\infty }$. In particular, $\varrho $
  preserves
  $\cM_{a,b}\cap m_{\infty }=\left\{ (0,1,x,0):\nr(x)=1\right\} $.

  Now, we
  are going to determine the matrix representation of $\varrho$.
By Remark \ref{remNot} we have

   \[
     \begin{pmatrix}
       0 & 1 & 1 & 0 %
     \end{pmatrix}
     \begin{pmatrix}
       1 & \fB & \fC & \fD \\
       0 & \fF{} & \fG{} & \fH{} \\
       0 & \fJ{} & \fK{} & \fL{} \\
       0 & 0 & 0 & \fP{} %
     \end{pmatrix}=%
     \begin{pmatrix}
       0 & \fF{}+\fJ{} & \fG{}+\fK{} & \fH{}+\fL{} %
     \end{pmatrix},
   \]%
   \begin{multline*}
     \begin{pmatrix}
       0 & 1 & w & 0 %
     \end{pmatrix}
     \begin{pmatrix}
       1 & \fB     & \fC & \fD \\
       0 & \fF{}     & \fG{} & \fH{} \\
       0 & \fF{}+\fG{}+\fK{} & \fK{} & \fH{} \\
       0 & 0     & 0 & \fP{} %
     \end{pmatrix}=\\
     \begin{pmatrix}
       0 & \fF\left( w+1\right)+\fG{}w+\fK{}w & \fG{}+\fK{}w & \fH{}\left( w+1\right) %
     \end{pmatrix}.
   \end{multline*}
   Since the collineation $\rho$ exists and $w\neq 1$ we have $\fF\left( w+1\right)+\fG{}w+\fK{}w\neq0$,
   $\fG+\fK w\neq 0$ and $\fH=0$.
   Furthermore,
   \begin{multline*}
     \left( \fF{}+\fF{}w+\fG{}w+\fK{}w\right) w+\fG{}+\fK{}w = \fF{}\left( w^2+w\right) +\fG{}\left(
       w^{2}+1\right) +\fK{}\left( w^{2}+w\right)=0
   \end{multline*}%
   \[
     \fK{}=\fF{}+\fG{}\left( w^{q}+1\right).
   \]%
  In particular
 $\fF \neq \fG,\fG w^{q}$ because the matrix associated to $\varrho$ is non-singular.
   Consider the point of $ \cM_{a,b}$ with coordinates $(1,1,1,\theta)$, where $\theta \in \GF{q}$. Then {\small
     \[
       \left(
         \begin{array}{cccc}
           1 & 1 & 1 & \theta
         \end{array}%
       \right) \left(
         \begin{array}{cccc}
           1 & \fB      & \fC                        & \fD \\
           0 & \fF      & \fG                        & 0 \\
           0 & \fG w^{q} & \fF + \fG \left( w^{q}+1\right) & 0 \\
           0 & 0      & 0                        & \fP %
         \end{array}%
       \right) =\left( \allowbreak
         \begin{array}{cccc}
           1 & \fB+\fF+\fG w^{q} & \fC+\fF+\fG w^{q} & \fD+\fP \theta
         \end{array}%
       \right),
     \]%
   }%
   \begin{equation}
     \tr(\fD)+\tr(\fP) \theta +\tr(a(\fB+\fC)^{2}=\tr(b)[\fB^{q+1}+\fC^{q+1}+\tr(\fB+\fC)(\fF^{q}+\fG^{q}w)].
     \label{dva}
   \end{equation}%
   which must be fulfilled for each $\theta \in \GF{q}$ and hence $\tr(\fP)=0$ and
   \begin{equation*}
     \tr(\fD)+\tr(a\fB+\fC)^{2}=\tr(b)[\fB^{q+1}+\fC^{q+1}+\tr\left( (\fB+\fC)(\fF^{q}+\fG^{q}w)\right) ] \textit{.}
   \end{equation*}
  Thus, $%
   \fP\in\GF{q}^*$.

   Now $\varrho $ lies $G$, so
   $\mu_{\delta }\varrho$ does. Now, possibly after choosing
   $\delta =\fP^{-1/2}$ since $\fP\in\GF{q}$, we may consider
   \[
     \varrho ^{\prime }=\mu_{\fP^{-1/2}}\varrho =
     \begin{pmatrix}
       1 & \fB      & \fC                        & \fD \\
       0 & \fF      & \fG                        & 0 \\
       0 & \fG w^{q} & \fF+ \fG \left( w^{q}+1\right) & 0 \\
       0 & 0      & 0                        & 1 %
     \end{pmatrix},
   \]%
   which no longer induces a $(Q,\ell_{\infty})$-homology of
   $\Sigma _{\infty }$, but still has odd order and fixes the triangle vertices $P_{\infty}$,
   $V_{\infty}$ and $Q$, and preserves the points on the Hermitian
   cone $%
   \cM_{a,b}\cap \Sigma_{\infty }$ with apex $P_{\infty}$ and
   $\cM_{a,b}\setminus \Sigma_{\infty}$. In particular,
   $\varrho ^{\prime }$ preserves $%
   \cM_{a,b}\cap m_{\infty }=\left\{ (0,1,y,0):\nr(y)=1\right\} $, thus
   \begin{multline*}
     \left(
       \begin{array}{cccc}
         0 & 1 & y & 0 %
       \end{array}%
     \right) \left(
       \begin{array}{cccc}
         1 & \fB      & \fC                        & \fD \\
         0 & \fF      & \fG                        & 0 \\
         0 & \fG w^{q} & \fF+ \fG\left( w^{q}+1\right) & 0 \\
         0 & 0      & 0                        & 1 %
       \end{array}%
     \right) = \\
     \begin{pmatrix}
         0 & \fF+ \fG w^{q}y & \fG+ \fF y+\fG y+\fG w^{q}y & 0 %
       \end{pmatrix},%
   \end{multline*}%
with $\fF \neq \fG w^{q}y$ for each $y \in \GF{q^{2}}$ with $N(y)=1$, and hence
	\[
          \nr\left( \frac{\left( \fF+\fG+\fG w^{q}\right)
              y+\fG}{\fG w^{q}y+\fF}\right) =1.
	\]%
	Thus,
        $\varsigma :x\mapsto \frac{\left( \fF+ \fG +\fG w^{q}\right)
          x+\fG}{\fG w^{q}x+\fF}$ is an element of $\PGL{2}{q^{2}}$ fixing $%
        1,w$ and preserving the Baer subline
        $\left\{  y \in \GF{q^{2}}:\nr(y)=1\right\} $ of $PG_{1}(q^{2})$. Hence, $%
        \varsigma $ lies in the cyclic subgroup of order $q-1$ of
        $\PGL{2}{q^{2}}$ fixing $%
        1,w$ and preserving $\left\{  y \in \GF{q^{2}}:\nr(y)=1\right\} $.
	
	For each $c\in\GF{q}^{\ast }$ consider $\alpha_{c}\in
        \PGL{2}{q^{2}}$ defined by
	\[
          \alpha_{c}:x\mapsto \frac{\allowbreak x\left(
              c+cw+w^{q}+1\right) +c+cw+w+1}{\allowbreak x\left(
              c+cw^{q}+w^{q}+1\right) +c+cw^{q}+w+1}.
	\]%
	Then $1^{\alpha_c }=1$, $w^{\alpha_c }=w$ and
        $(w^{q})^{\alpha_c }=w\left(w(c+cw^{q}+w+1)+c+cw^{q}+w^{q}+1 \right)^{q-1}$. Indeed,
	\begin{eqnarray*}
          \frac{w^{q}\left( c+cw+w^{q}+1\right) +c+cw+w+1}{\allowbreak w^{q}\left(
          c+cw^{q}+w^{q}+1\right) +c+cw^{q}+w+1} &=&\frac{\left(w(c+cw^{q}+w+1)+c+cw^{q}+w^{q}+1 \right)^{q}}{w^{q} \left(w(c+cw^{q}+w+1)+c+cw^{q}+w^{q}+1 \right) }\\
                                                 &=&w\left(w(c+cw^{q}+w+1)+c+cw^{q}+w^{q}+1 \right)^{q-1}\mathit{.}
	\end{eqnarray*}%
	Thus $\left\{ \alpha_{c}:c\in\GF{q}^{\ast }\right\} $ is the
        cyclic subgroup of order $q-1$ of $\PGL{2}{q^2}$ fixing $%
        1,w$ and preserving the Baer subline $\left\{ y \in \GF{q^{2}}:\nr(y)=1\right\} $ of $PG_{1}(q^{2})$, and hence%
	\begin{eqnarray*}
          \fF &=&\allowbreak \frac{c+w+cw^{q}+1}{d}, \\
          \fG &=&\frac{c+cw+w+1}{d}
	\end{eqnarray*}%
	for some suitable $c\in\GF{q}^{\ast }$ and
        $d \in\GF{q^2}^{\ast } $. It is easy to check that, such $\fF$ and $\fG$ are such that $\nr(\fF) \neq \nr(\fG)$ and hence $\fF$ and $\fG$ fulfill $\fF \neq \fG w^{q}y$ for each $y \in \GF{q^{2}}$ with $N(y)=1$. Further,
	$$\fF+\fG w^{q} + \fG =\frac{c+w+cw^{q}+1}{d}+\left( \frac{c+cw+w+1}{d}\right) w^{q}+\frac{c+cw+w+1}{d}=\frac{c+cw+w^{q}+1}{d}$$ and by multiplying each term of the matrix representing
        $\varrho^{\prime}$ by $d$, we may assume that
        $\varrho^{\prime}$ is represented by
	$$
		\begin{pmatrix}
                  d & \fB                        & \fC            & \fD \\
                  0 & \allowbreak c+w+cw^{q}+1 & c+cw+w+1     & 0 \\
                  0 & c+cw^{q}+w^{q}+1         & c+cw+w^{q}+1 & 0 \\
                  0 & 0                        & 0            & d %
		\end{pmatrix}.
	$$
	Note that $\tau_{\frac{c+1}{w+w^{q}}}\varrho ^{\prime }$ is an
        element of $G$ represented by
	\begin{multline*}
          \begin{pmatrix}
            1 & 0                     & 0                     & 0 \\
            0 & \frac{c+1}{w+w^{q}}+1 & \frac{c+1}{w+w^{q}}   & 0 \\
            0 & \frac{c+1}{w+w^{q}}   & \frac{c+1}{w+w^{q}}+1 & 0 \\
            0 & 0                     & 0                     & 1 %
          \end{pmatrix}
          \begin{pmatrix}
            d & \fB                        & \fC            & \fD \\
            0 & \allowbreak c+w+cw^{q}+1 & c+cw+w+1     & 0 \\
            0 & c+cw^{q}+w^{q}+1         & c+cw+w^{q}+1 & 0 \\
            0 & 0                        & 0            & d %
          \end{pmatrix}=\\
          \begin{pmatrix}
            d & \fB            & \fC        &\fD \\
            0 & w+cw^{q}     & w+cw     & 0 \\
            0 & cw^{q}+w^{q} & cw+w^{q} & 0 \\
            0 & 0            & 0        & d %
          \end{pmatrix}.
	\end{multline*}
	Set $\fR=w+cw^{q}$, then $\tau_{\frac{c+1}{w+w^{q}}}\varrho
        ^{\prime }$ is of the form
	\[
          \begin{pmatrix}
              d & \fB             & \fC         & \fD \\
              0 & \fR             & \fR+\lambda & 0 \\
              0 & \fR^{q}+\lambda & \fR^{q}     & 0 \\
              0 & 0             & 0         & d %
           \end{pmatrix},
	\]%
	where $\lambda =c\tr(w)\neq 0$ and $\tr(\fR)=\tr(w)(c+1)=\lambda
        +\tr(w)$.
	
	The point $(1,x_{0},x_{0},c_{0})$ with $c_{0}\in\GF{q}$ lies in $\cM_{a,b}\setminus
        \Sigma_{\infty }$, and hence
	\begin{multline*}
          \begin{pmatrix}
            1 & x_{0} & x_{0} & c_{0} %
          \end{pmatrix}
          \begin{pmatrix}
            d & \fB             & \fC         & \fD \\
            0 & \fR             & \fR +\lambda & 0 \\
            0 & \fR^{q}+\lambda & \fR^{q}     & 0 \\
            0 & 0             & 0         & d %
          \end{pmatrix}=\\
          \begin{pmatrix}
            d & \fB+x_{0}\lambda +\fR^{q}x_{0}+\fR x_{0} & \fC+x_{0}\lambda + \fR^{q}x_{0}+\fR x_{0} & c_{0}d+\fD %
          \end{pmatrix},
	\end{multline*}
	which is equivalent to
	$$
		\begin{pmatrix}
                  1 & \frac{\fB+x_{0}\lambda +\fR^{q}x_{0}+\fR x_{0}}{d} & \frac{\fC +x_{0}\lambda +\fR^{q}x_{0}+\fR x_{0}}{d} & c_{0}+\frac{\fD}{d} %
		\end{pmatrix} \textit{,}
	$$
 where
	\[
          \left( \frac{\fB +x_{0}\lambda +\fR^{q} x_{0}+\fR x_{0}}{d}\right)
          +\left( \frac{\fC +x_{0}\lambda +\fR^{q} x_{0}+\fR x_{0}}{d}\right)
          =\frac{1}{d}\left(\fB+\fC \right),
	\]
	\begin{eqnarray*}
          \frac{1}{d^{q+1}} \left[ \left( \fB+x_{0}\lambda +\fR^{q}x_{0}+\fR x_{0}\right) ^{q+1}+\left(\fC+x_{0}\lambda
          +\fR^{q}x_{0}+\fR x_{0}\right)^{q+1} \right] &= \\ \frac{1}{d^{q+1}} \left[ \fB^{q+1}+\fC^{q+1}+(\lambda +\tr(\fR))\tr[(\fB+\fC)x_{0}^{q}] \right] &= \\
          \frac{1}{d^{q+1}} \left[\fB^{q+1}+\fC^{q+1}+\tr(w)\tr[(\fB+\fC)x_{0}^{q}] \right] \textit{.}
	\end{eqnarray*}
 Now, since the image of the point $\left(1,x_{0},x_{0},c_{0} \right)$ under $\tau_{\frac{c+1}{w+w^{q}}}\varrho
        ^{\prime }$ must lie in $\cM_{a,b}\setminus
        \Sigma_{\infty }$, it follows that
	\begin{eqnarray*}
          \tr(c_{0})+\tr(\fD /d)+\tr(a(\fB/d+\fC/d)^{2})=& \\
          \tr(b)[(\fB/d)^{q+1}+(\fC/d)^{q+1}]+\tr(b)\tr(w)\tr[(\fB+\fC)x_{0}^{q}]/d^{q+1}, \\[3pt]
          \tr(\fD/d)+\tr(a(\fB/d+\fC/d)^{2})= \\
          \tr(b)[(\fB/d)^{q+1}+(\fC/d)^{q+1}]+\tr(b)\tr(w)\tr[(\fB+\fC)x_{0}^{q}]/d^{q+1}.
	\end{eqnarray*}
The previous equation must be fulfilled for each value of $x_{0}$ in $\GF{q^{2}}$, then $\tr(b)\tr(w)(\fB+\fC)=0$ and
	\begin{equation}\label{krug}
          \tr(\fD/d)+\tr(a(\fB/d+\fC/d)^{2}) =\tr(b)\left[(\fB/d)^{q+1}+(\fC/d)^{q+1}\right] \textit{.}
	\end{equation}
	Therefore $\fB=\fC$ since $b,w \in \GF{q^{2}}\setminus \GF{q}$, and hence $\fD /d \in\GF{q}$. Thus

	\[
          \varrho ^{\prime \prime }= \phi_{ \fD /d} \tau_{\frac{c+1}{w+w^{q}}}\varrho ^{\prime }=
              \begin{pmatrix}
              d & \fB      & \fB          & 0 \\
              0 & \fR      & \fR+\lambda          & 0 \\
              0 & \fR^{q}+\lambda & \fR^{q} & 0 \\
              0 & 0      & 0          & d %
            \end{pmatrix}
	\]
	lies in $G$, and hence it
	preserves $\cM_{a,b}\setminus \Sigma_{\infty }$.

                     Since the trace is surjective, any
                     point with coordinates $(1,x,0,z)$, where $x$ is any element of $\GF{q^2}$, and $z$ is a suitable element of $\GF{q^2}$ depending on the choice of $x$,
                    lies in $\cM_{a,b}\backslash \Sigma_{\infty
                     }$, and hence
                     \begin{equation}\label{bucurest}
                       z^{q}+z+a^{q}x^{2q}+ax^{2}
                       =(b^{q}+b)x^{q+1} \textit{,}
                     \end{equation}
                     Then $(1,x,0,z)^{\varrho ^{\prime \prime }}$, which is given by
                     \begin{multline*}
          \begin{pmatrix}
            1 & x & 0 & z %
          \end{pmatrix}\begin{pmatrix}
                         d & \fB             & \fB         & 0 \\
                         0 & \fR             & \fR+\lambda & 0 \\
                         0 & \fR^{q}+\lambda & \fR^{q}     & 0 \\
                         0 & 0             & 0         & d %
                       \end{pmatrix}=
                       \begin{pmatrix}
                         d & \fB+\fR x & \fB+x\lambda +\fR x & zd %
                       \end{pmatrix} \textit{,}
                     \end{multline*}
                     lies in $
                     \cM_{a,b}\backslash \Sigma_{\infty }$, and hence
                     \begin{equation}\label{Chisinau}
                       d^{q+1}(z^{q}+z)+a^{q}\lambda ^{2} d^{1-q}x^{2q}+a\lambda ^{2} d^{q-1}x^{2}
                       = (b^{q}+b)\left[ \lambda ^{2} x^{q+1}+\lambda \tr\left[ \left(
                             \fB + \fR x\right) x^{q}\right] \right],
                     \end{equation}
                     Now, combining (\ref{bucurest}) with (\ref{Chisinau}) one obtains
                     \begin{multline}
                       a^{q}\left( \lambda ^{2}d^{1-q}+d^{q+1}\right) x^{2q}+a\left( \lambda
                         ^{2}d^{q-1}+d^{q+1}\right) x^{2}=\\ (b^{q}+b)\left[ \left( \lambda ^{2}+d^{q+1}+\lambda \tr (\fR))
                         x^{q+1}+\lambda \tr\left( \fB x^{q} \right)\right) %
                       \right].  \label{jedan}
                     \end{multline}
                     Consequently, equality in (\ref{jedan}) must be fulfilled for each $x \in \GF{q^2}$. Thus $\lambda ^{2}d^{1-q}+d^{q+1}=\lambda
                         ^{2}d^{q-1}+d^{q+1}=\fB= \tr (\fR)=0$, and hence $c=1$, $\fR=\tr(w)=\lambda=d$ since $\lambda =c\tr(w)\neq 0$, $\fR=w+cw^{q}$ and $\tr(\fR)=\tr(w)(c+1)=\lambda
        +\tr(w)$, where $w \in\GF{q^{2}} \setminus \{1\}$ is such that $\nr(w)=1$. So $\varrho ^{\prime \prime }=1$, and hence $\varrho=\left( \phi_{\fD/d}\tau_{\frac{%
              c+1}{w+w^{q}}}\mu_{\fP^{-1/2}}\right)^{-1} \in \left\langle W,D\right\rangle$, which is a contradiction.
                   \end{proof}

\bigskip

From now on, we denote the stabilizer in $PGL_{4}(q^{2})$ and in $P\Gamma L_{4}(q^{2})$
    of $\cM_{a,b}$ by $G(a,b)$ and $\Gamma(a,b)$ respectively.

\bigskip

\begin{theorem}
  \label{SemiLinColl} Let $\sigma$ be an element $P\Gamma L_{4}(q^{2})$ induced by a generator of $\mathrm{Aut}(\GF{q^2})$, and let
$\beta$ be an element $P\Gamma L_{4}(q^{2})$ of the form as in Lemma \ref{fin} mapping $\cM_{1,\epsilon}$ with $Tr(\epsilon)=1$ onto $\cM_{a,b}$, which exists by Theorem \ref{mainequiv}. Then
  $$\Gamma(a,b)=\left\langle
    \phi_{s},\psi_{\gamma }(a,b),\tau_{e},\mu _{\delta },\sigma^{\beta}:\gamma
    \in\GF{q^2}^{2}\mathit{, }s,e,\delta \in\GF{q}\mathit{, }%
    \delta \neq 0\right\rangle \textit{,}$$
  and its order is $q^{6}(q-1)\log_{2}q$.
\end{theorem}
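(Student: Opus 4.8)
The plan is to reduce everything to the normal-form variety $\cM_{1,\epsilon}$ and then to analyse the quotient $\Gamma(a,b)/G(a,b)$. First I would record the structural principle that $G(a,b)=\Gamma(a,b)\cap\PGL{4}{q^2}$ is normal in $\Gamma(a,b)$ and that the quotient $\Gamma(a,b)/G(a,b)$ embeds, via the field-automorphism part of a semilinear map, into the cyclic group $\Aut(\GF{q^2})$; call its image $\bar\Gamma$, so that $|\Gamma(a,b)|=|G(a,b)|\cdot|\bar\Gamma|=q^{6}(q-1)\,|\bar\Gamma|$ by Theorem~\ref{LinColl}. Since $\beta$ maps $\cM_{1,\epsilon}$ onto $\cM_{a,b}$ (Theorem~\ref{mainequiv} together with Lemma~\ref{fin}), conjugation by $\beta$ sends $\Gamma(1,\epsilon)$ isomorphically onto $\Gamma(a,b)$ and $G(1,\epsilon)$ onto $G(a,b)$; in particular it preserves orders and carries the prescribed generator $\sigma$ to $\sigma^{\beta}$. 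Hence it suffices to prove $\Gamma(1,\epsilon)=\langle G(1,\epsilon),\sigma\rangle$ with the claimed order, and then transport the conclusion through $\beta$.

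Next I would establish the inclusion $\langle G(1,\epsilon),\sigma\rangle\le\Gamma(1,\epsilon)$; the only nontrivial point is that $\sigma$ stabilises $\cM_{1,\epsilon}$. The clean way to see this is to observe that a field-automorphism collineation $t\mapsto t^{2^{k}}$ carries $\cM_{1,b}$ onto $\cM_{1,b^{2^{k}}}$, since applying $2^{k}$-th powers to the defining relation \eqref{eq:mab} merely raises its coefficients to the power $2^{k}$. Comparing affine equations and using that in characteristic two one has $\GF{q}=\ker\tr$ (so that membership in $\cM_{1,b}$ is exactly the vanishing of a single trace), one checks that $\cM_{1,b_{1}}$ and $\cM_{1,b_{2}}$ coincide as point-sets precisely when $b_{1}+b_{2}\in\GF{q}$. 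Thus $\sigma$ preserves $\cM_{1,\epsilon}$ as soon as $\tr(\epsilon^{2^{k}})=\tr(\epsilon)$, a relation that is controlled by $\tr(\epsilon^{2^{k}})=\tr(\epsilon)^{2^{k}}$ and by the hypothesis $\tr(\epsilon)=1$. Transporting through $\beta$ then gives $\sigma^{\beta}\in\Gamma(a,b)$, so $\langle G(a,b),\sigma^{\beta}\rangle\le\Gamma(a,b)$.

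For the reverse inclusion I would take an arbitrary $\psi\in\Gamma(1,\epsilon)$, read off its field-automorphism part $\bar\psi\in\Aut(\GF{q^2})$, and multiply $\psi$ on the left by the appropriate power of $\sigma$ so that the product has trivial field part; that product then lies in $\PGL{4}{q^2}$ and still stabilises $\cM_{1,\epsilon}$, hence lies in $G(1,\epsilon)$ by Theorem~\ref{LinColl}. This shows $\Gamma(1,\epsilon)=\langle G(1,\epsilon),\sigma\rangle$ and identifies $\bar\Gamma$ as the cyclic group generated by the class of $\sigma$. The order formula then reduces to computing $m:=|\bar\Gamma|$, and I expect \emph{this count to be the main obstacle}: one must decide exactly which of the field automorphisms $t\mapsto t^{2^{k}}$ of $\GF{q^2}$ extend to a collineation stabilising $\cM_{1,\epsilon}$, a question which by the computation above is governed entirely by the trace condition $\tr(\epsilon^{2^{k}})=\tr(\epsilon)$, i.e.\ by how the fixed field of $t\mapsto t^{2^{k}}$ meets the coset $\epsilon+\GF{q}$. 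Carrying this out precisely — and verifying in passing that no additional collineation is forced off the affine chart or on the Hermitian cone $\cM_{a,b}\cap\Sigma_{\infty}$ — pins down $m$ and yields the order of $\Gamma(a,b)$ stated in the theorem.
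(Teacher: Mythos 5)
Your overall strategy is the same as the paper's: conjugate by $\beta$ to reduce to the normal form $\cM_{1,\epsilon}$, verify that the Frobenius collineation $\sigma$ stabilizes $\cM_{1,\epsilon}$, and then, given an arbitrary $\xi\in\Gamma(1,\epsilon)$, strip off its field-automorphism part by a power of $\sigma$ and invoke Theorem~\ref{LinColl} to get $\Gamma(1,\epsilon)=\langle G(1,\epsilon),\sigma\rangle$, transporting back through $\beta$ at the end. Your verification that $\sigma$ preserves $\cM_{1,\epsilon}$ is a mild repackaging of the paper's: the paper raises the membership relation $\tr(z)+\tr(x^{2}+y^{2})+\nr(x)+\nr(y)=0$ to the $2^{i}$-th power, while you factor the same computation through the two observations that $t\mapsto t^{2^{k}}$ carries $\cM_{1,b}$ to $\cM_{1,b^{2^{k}}}$ and that $\cM_{1,b_{1}}=\cM_{1,b_{2}}$ exactly when $b_{1}+b_{2}\in\GF{q}$ (equivalently $\tr(b_{1})=\tr(b_{2})$, since in characteristic two $\ker\tr=\GF{q}$); both hinge on $\tr(\epsilon^{2^{k}})=\tr(\epsilon)^{2^{k}}=1$. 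All of that is sound.

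The genuine gap is the final order count, which you explicitly defer as ``the main obstacle'' --- but by that point there is no residual obstacle of the kind you describe, and deferring it hides the one delicate issue. Your own second paragraph already shows that \emph{every} power $t\mapsto t^{2^{k}}$ stabilizes $\cM_{1,\epsilon}$ (the condition $\tr(\epsilon^{2^{k}})=\tr(\epsilon)$ holds for all $k$ once $\tr(\epsilon)=1$), so $\bar{\Gamma}$ is the full image of $\langle\sigma\rangle$ in $\Aut(\GF{q^2})$; and since no nontrivial power of $\sigma$ is a projectivity, $\langle\sigma\rangle\cap G(1,\epsilon)=1$, whence $|\Gamma(1,\epsilon)|=q^{6}(q-1)\cdot o(\sigma)$ with no further case analysis of which automorphisms extend, nor any extra check on the Hermitian cone, required. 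What this exposes, and what your proposal stops just short of confronting, is the numerics: a generator of $\Aut(\GF{q^2})$ has order $\log_{2}(q^{2})=2\log_{2}q$, so the count your argument (and, read literally, the paper's identical argument) produces is $q^{6}(q-1)\cdot 2\log_{2}q$, whereas the paper asserts $o(\sigma^{\beta})=\log_{2}q$ and decomposes $\xi=\sigma^{j}\alpha$ with $j=0,\dots,\log_{2}q-1$ to arrive at the stated $q^{6}(q-1)\log_{2}q$. Completing your plan honestly would therefore not reproduce the stated constant but would force you to resolve this factor-of-two discrepancy (e.g., whether $\sigma$ is really meant to range over all of $\Aut(\GF{q^2})$ or only over a subgroup of order $\log_{2}q$); since that is precisely where the theorem's order is decided, leaving it open leaves the proof incomplete.
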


\begin{proof}
  We may assume that $\sigma :(j,x,y,z)\mapsto
  (j^{2},x^{2},y^{2},z^{2})$. Clearly, $%
  \sigma $ fixes $\Sigma_{\infty }$, $m_{\infty }$ $P_{\infty }$,
  where $%
  m_{\infty }:J=Z=0$. Also $\sigma $  permutes the points
  $(0,1,\omega ^{j},0)$, where $j=0,\dots,q-1$ fixing $(0,1,1,0)$. Thus,
  $\left\langle \sigma \right\rangle $ preserves the Hermitian cone
  {\normalsize $\cM_{1,\epsilon}\cap \Sigma_{\infty }$} fixing
  $\ell_{\infty }$.
	
	Now, let $(1,x,y,z)\in \cM_{1,\epsilon}\setminus \Sigma_{\infty}$
	then
	\[
		\tr (z)+\tr (x^{2}+y^{2})+N(x)+N(y) =0,
	\]
	thus
	\[
		\left(\tr (z)+\tr (x^{2}+y^{2})+N(x)+N(y) \right)^{2^{i}} =0,
	\]
	and hence
	\[
		(\tr (z^{2^{i}})+\tr ((x^{2^{i}})^{2}+(y^{2^{i}})^{2})+N(x^{2^{i}})+N(y^{2^{i}}) =0 \textit{.}
	\]
	Therefore, $\sigma$ preserves $\cM_{1,\epsilon}$  and hence $\Lambda(1,\varepsilon) \leq \Gamma(1,\varepsilon)$, where
	$$ \Lambda(1,\varepsilon)=G(1,\varepsilon)\left\langle \sigma\right\rangle = \left\langle
    \phi_{s},\psi_{\gamma }(1,\epsilon),\tau_{e},\mu _{\delta },\sigma:\gamma
    \in\GF{q^2}^{2}\mathit{, }s,e,\delta \in\GF{q}\mathit{, }%
    \delta \neq 0\right\rangle \textit{.}$$
	Let $\xi \in \Gamma (1,\varepsilon)$, then $\xi \in P\Gamma L_{4}(q^{2})$ and hence $\xi=\sigma ^{j}\alpha$ for some $j=0,...,\log_{2}q-1$ and $\alpha \in PGL_{4}(q^{2})$. Then $\sigma ^{-j} \xi \in\Gamma(1,\varepsilon) \cap PGL_{4}(q^{2})= G(1,\varepsilon)$ by Theorem \ref{LinColl} since $\sigma$ preserves $\cM_{1,\epsilon}$. Thus, $\xi \in G(1,\varepsilon)\left\langle \sigma\right\rangle =\Lambda(1,\varepsilon)$, and hence $ \Lambda(1,\varepsilon)= \Gamma(1,\varepsilon)$, whose order clearly is $q^{6}(q-1)\log_{2}q$. Then $\Gamma(a,b)$ has order $q^{6}(q-1)\log_{2}q$ since $\Gamma(a,b)=\Gamma(1,\varepsilon)^{\beta}$. Therefore,
	$$\Gamma(a,b)=\left\langle
    \phi_{s},\psi_{\gamma }(a,b),\tau_{e},\mu _{\delta },\sigma^{\beta}:\gamma
    \in\GF{q^2}^{2}\mathit{, }s,e,\delta \in\GF{q}\mathit{, }%
    \delta \neq 0\right\rangle$$
	 by Theorem \ref{LinColl} since $\sigma^{\beta} \in \Gamma(a,b)$, $o(\sigma^{\beta})=\log_{2}q$ and $\left\langle \sigma^{\beta}\right\rangle \cap G(a,b)=1$, which is the assertion.
	\end{proof}

\section{Some orthogonal arrays}
\label{oarr}

Let $S$ be a set with $v:=|S|$ elements.
An $N\times k$ array with entries in $S$
is an \emph{orthogonal array} $OA(N,k,v,t)$
with $v$ levels, strength $t$ and index
$\lambda:=N/v^t$ if every $N\times t$ subarray of $A$ contains
each $t$-uple of elements of
$S$ exactly $\lambda$ times; see~\cite{Sloane}.
Well--known examples of orthogonal arrays are latin squares and Hadamard matrices.

There is a very rich literature about orthogonal arrays, as they
play an important role in statistics (where they are used in devising
experimental designs), cryptography (e.g. in constructing threshold schemes)
as well as in computer science (where they are used both for quality
control and for optimizing the placement and routing of elements on
PCBs). More recent applications have been found in the calibration of
the flight parameters of drones in order to optimize their
performance in the detection of some prescribed features; see~\cite{UAV}.

A general geometric
procedure for constructing an orthogonal array is as follows:
let $f_1,\ldots,f_k$ be homogeneous forms in $n+1$ unknowns defining some
algebraic varieties $V(f_1),\dots,V(f_k)$, let also
$\cW\subseteq\GF{q}^{n+1}$ be a set of representatives
of distinct points of $\Sigma=\PG(n,q)$  with $|\cW|=N$. The array
\[
	A(f_1,\ldots,f_k;\cW)= \left\{\begin{pmatrix}
		f_1(x) &
		f_2(x) &
		\dots  &
		f_k(x)
	\end{pmatrix} : x \in \cW \right\}, \]
with  an arbitrary order of rows, is orthogonal
if the size  of the intersection $V(f_i)\cap
	V(f_j)\cap\cW$ for distinct varieties $V(f_i)$ and $V(f_j)$, is
independent of the choice of $i$, $j$. This procedure was applied to
linear functions by Bose~\cite{Bo},  to quadratic functions by
Fuji-Hara and Miyamoto~\cites{FuMi1,FuMi2} and to Hermitian forms
by Aguglia and Giuzzi~\cite{A}.

In general, it is possible to generate  functions $f_i$ starting
from homogeneous polynomials in $n+1$ variables and considering the
action of a suitable subgroup of the projective group $\PGL{n+1}{q}$.
Recall that,
the image $V(f)^g$ of $V(f)$ under the action of an element $g\in
	\PGL{n+1}{q}$ is a variety $V(f^g)$ of $\Sigma$, associated with the
polynomial $f^g$.
In~\cite{FuMi2}, the authors used a
subgroup of $\PGL{4}{q}$, in order to obtain suitable quadratic
functions in $4$ variables; then, the domain $\cW$ of these
functions was appropriately restricted to a set of $q^3$
representatives, thus producing an orthogonal
array of type $OA(q^3,q^2,q,2)$.

Here, we  construct  a simple
$OA(q^{5},q^{4}, q,2)=\cA_0 $, with entries in $\GF{q}$,  $q>2$ an even prime power, using  the above procedure with
forms related to the BM quasi-Hermitian varieties $\cM_{a,b}$. To do this we look into the action of a large
subgroup of $\PGL{4}{q^2}$ on a set of BM quasi-Hermitian
varieties in $\PG(3,q^2)$.

As seen before, $\cM_{a,b}$  has the same affine points as the variety $\cB_{a,b}$  associated to the form
\[F=Z^qJ^q+ZJ^{2q-1}+a^q(X^{2q}+Y^{2q})-a(X^2+Y^2)J^{2q-2}+(b+b^q)(X^{q+1}+Y^{q+1})J^{q-1}.\]

We shall now construct an array by choosing suitable varieties of
the form $\cM_{a,b}$ lying in the orbit of a suitable set of
affine collineations.

Take
$G$ as the subgroup of $\PGL{4}{q^2}$ consisting of all elations represented by
\[c (j',x',y',z')= (j,x,y,z)M\]
where $c \in\GF{q^2}^*$, and
\begin{equation} \label{}
	M=\begin{pmatrix}
		1 & \gamma_1 & \gamma_2 & \gamma_3 \\
		0 & 1        & 0        & \gamma_4 \\
		0 & 0        & 1        & \gamma_5 \\
		0 & 0        & 0        & 1        \\
	\end{pmatrix},
\end{equation}
with $\gamma_i\in\GF{q^2}$.
Then,
the group $G$ has order
$q^{10}$, it stabilizes the hyperplane $\Sigma_\infty$, fixes
the point $P_{\infty}(0,0,0,1)$ and acts transitively on
$\AG(3,q^2)$ (i.e. it acts as an affine group of collineations).

Let now  $\Psi$ be the subgroup of $G$  consisting  of all
elations 
whose matrices are of the form
\begin{equation} \label{BM}
	\begin{pmatrix}
		1 & \gamma_1 & \gamma_2 & a(\gamma_1^2+\gamma_2^2)+ b(\gamma_1^{q+1}+\gamma_2^{q+1})+s \\
		0 & 1        & 0        & (b^q+b)\gamma_1^{q}                                         \\
		0 & 0        & 1        & (b^q+b)\gamma_2^{q}                                         \\
		0 & 0        & 0        & 1                                                                    \\
	\end{pmatrix},
\end{equation}
with $\gamma_1, \gamma_2\in\GF{q^2}$, $s\in\GF{q}$ .
The group
$\Psi$ contains  $q^{5}$ elations, preserves $\cM_{a,b}$ and acts on the
affine points of $\cM_{a,b}$, that is to say the affine points of $\cB_{a,b}$, as a sharply transitive permutation group.
Let also $C=\{a_1=0,\ldots,a_q\}$ be a set of representatives
for the elements of $\GF{q^2}/\GF{q}$ (regarding them both as
their additive groups).
Denote now by
$\cR$ the subset of $G$
whose
elations are induced by
\begin{equation} \label{collin}
	\begin{pmatrix}
		1 & \gamma_1 & \gamma_2 & \gamma_3 \\
		0 & 1        & 0        & 0        \\
		0 & 0        & 1        & 0        \\
		0 & 0        & 0        & 1        \\
	\end{pmatrix},
\end{equation}
where $\gamma_1, \gamma_2 \in\GF{q^2}$, and  $\gamma_3$ is the unique solution in
$C$ of the equation
\begin{equation}\label{ara}
	\gamma_3^q+\gamma_3+a^q(\gamma_1^{2q}+\gamma_2^{2q})+a(\gamma_1^2+\gamma_2^2)+(b+b^q)(\gamma_1^{q+1}+\gamma_2^{q+1})=0.
\end{equation}
The set  $\cR$ has cardinality $q^{4}$ and it can be seen that
its elements belong to a transversal of the group $\Psi$; in particular
$\cR$
can be used to
construct a set $\{F^g| g\in \cR\}$ of forms whose related $\cB_{a,b}$'s (and thus $\cM_{a,b}$'s) are pairwise distinct.
%
%
%
%

\begin{theorem}
	\label{teo:fh0}
	For any prime power $q$, the matrix
	$\cA_0=A(F^g,g\in \cR,\cW_0)$, where
	\[ \cW_0=\{(1,x,y,z): x,y \in  \GF{q^2}, z\in C\} \]
	is a simple $OA(q^{5},q^{4},q,2)$
	of index $\lambda=q^{3}$.
\end{theorem}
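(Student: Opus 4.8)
The plan is to exploit the translation structure of $\cR$ to obtain an explicit formula for every entry of $\cA_0$, and then to reduce the strength-$2$ condition to counting solutions of a single $\GF{q}$-linear equation. First I would record that every entry lies in $\GF{q}$: writing $f(x,y,z):=F(1,x,y,z)$, one has
\[ f(x,y,z)=\tr(z)+\tr(ax^2)+\tr(ay^2)+\tr(b)\bigl(\nr(x)+\nr(y)\bigr)\in\GF{q}, \]
since $z^q+z=\tr(z)$, $a^qx^{2q}+ax^2=\tr(ax^2)$ and $(b+b^q)x^{q+1}=\tr(b)\nr(x)$ (and likewise in $y$). An element $g\in\cR$ acts as the affine translation $(1,x,y,z)\mapsto(1,x+\gamma_1,y+\gamma_2,z+\gamma_3)$, so $F^g(1,x,y,z)=f(x+\gamma_1,y+\gamma_2,z+\gamma_3)$. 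Expanding in characteristic $2$ and using that $(1,\gamma_1,\gamma_2,\gamma_3)\in\cB_{a,b}$ — which is exactly equation~\eqref{ara}, i.e. $f(\gamma_1,\gamma_2,\gamma_3)=0$ — the constant term cancels and the mixed terms collapse to traces, giving the key identity
\[ F^g(1,x,y,z)=f(x,y,z)+\tr(b)\bigl[\tr(x^q\gamma_1)+\tr(y^q\gamma_2)\bigr]. \]
In particular the column depends on $g$ only through $(\gamma_1,\gamma_2)$, and since $\gamma_3$ is determined by $(\gamma_1,\gamma_2)$ via~\eqref{ara}, distinct elements of $\cR$ correspond to distinct pairs $(\gamma_1,\gamma_2)\in\GF{q^2}^2$.

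The crucial structural observation is that, for $(x,y)$ fixed, the map $z\mapsto f(x,y,z)=\tr(z)+\mathrm{const}$ is a bijection from $C$ onto $\GF{q}$, because $C$ is a transversal of $\GF{q^2}/\GF{q}$ and the trace is bijective on such a transversal; the same then holds for each $F^g$. Now I would fix two distinct columns, indexed by $(\gamma_1,\gamma_2)\neq(\gamma_1',\gamma_2')$, and values $c_1,c_2\in\GF{q}$, and count the rows $(1,x,y,z)\in\cW_0$ with $F^g(1,x,y,z)=c_1$ and $F^{g'}(1,x,y,z)=c_2$ by replacing this pair of conditions with their sum and one of them. The sum eliminates $f(x,y,z)$ and yields the purely linear equation
\[ \tr(x^q\delta_1)+\tr(y^q\delta_2)=c, \qquad \delta_i:=\gamma_i+\gamma_i',\quad c:=\tfrac{c_1+c_2}{\tr(b)}, \]
where $(\delta_1,\delta_2)\neq(0,0)$ and $\tr(b)\neq0$ since $b\notin\GF{q}$. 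For every $(x,y)$ solving this equation the remaining condition $F^g(1,x,y,z)=c_1$ fixes $z\in C$ uniquely by the bijection above, so the number of admissible rows equals the number of pairs $(x,y)\in\GF{q^2}^2$ satisfying the linear equation.

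The map $(x,y)\mapsto\tr(x^q\delta_1)+\tr(y^q\delta_2)$ is a $\GF{q}$-linear functional on the $4$-dimensional space $\GF{q^2}^2$; it is non-zero because $(\delta_1,\delta_2)\neq(0,0)$ and the trace form is non-degenerate. Each of its fibres is a coset of a $3$-dimensional kernel, hence has exactly $q^3$ elements, independently of $c$ and therefore of $(c_1,c_2)$. This delivers the index $\lambda=q^3=q^5/q^2$ and shows $\cA_0$ is an $OA(q^5,q^4,q,2)$. For simplicity I would finally show that distinct points of $\cW_0$ give distinct rows: if $F^g$ agrees on $(1,x,y,z)$ and $(1,x',y',z')$ for all $g\in\cR$, then the column $(\gamma_1,\gamma_2)=(0,0)$ (the identity of $\cR$, as $0\in C$) forces $f(x,y,z)=f(x',y',z')$, and varying $(\gamma_1,\gamma_2)$ together with non-degeneracy of the trace form forces $x=x'$ and $y=y'$; bijectivity of $z\mapsto\tr(z)$ on $C$ then gives $z=z'$. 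The one step needing genuine care is the formula for $F^g$ — specifically the cancellation of the constant term via~\eqref{ara} and the recognition that the surviving mixed terms are exactly $\GF{q}$-traces — since once this is in hand the remainder is elementary linear counting over $\GF{q}$.
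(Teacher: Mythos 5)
Your proof is correct and follows essentially the same route as the paper's: both reduce the strength-$2$ condition for a pair of columns to the single nonzero $\GF{q}$-linear trace equation obtained by differencing the two forms (the constant term cancelling via~\eqref{ara}), count $q^3$ solutions $(x,y)$ as a fibre of that functional, recover $z\in C$ uniquely from the bijectivity of the trace on the transversal $C$, and then establish simplicity via non-degeneracy of the trace form. If anything, your write-up is slightly more complete than the paper's, which explicitly verifies orthogonality only for column pairs containing the identity column $F$, whereas your closed formula $F^g(1,x,y,z)=f(x,y,z)+\tr(b)\bigl[\tr(x^q\gamma_1)+\tr(y^q\gamma_2)\bigr]$ handles arbitrary pairs $(g,g')$ at once and also yields a cleaner simplicity argument.
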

\begin{proof}
	We start by showing
	that the number of solutions in $\cW_0$ to the system
	\begin{equation}
		\left\{\begin{array}{l}
			\label{orto1}
			F(J,X,Y,Z)=t \\
			F^g(J,X,Y,Z)=t'
		\end{array}\right.
	\end{equation}
	is $q^{3}$ for any $t$, $t' \in\GF{q}$, $g\in
		\cR\setminus\{id\}$.
	By definition of  $\cW_0$, this
	system  is equivalent to
	\begin{equation}
		\left\{\begin{array}{l}
			\label{orto2}
			Z^q+Z+a^q(X^{2q}+Y^{2q})+a(X^2+Y^2)+(b+b^q)(X^{q+1}+Y^{q+1})=t \\
			Z^q+Z+a^q(X^{2q}+Y^{2q})+a(X^2+Y^2)+                                   \\
			\qquad\qquad\qquad\!\!\! (b+b^q)(X^{q+1}+Y^{q+1}+\gamma_1^qX+\gamma_2^qY+\gamma_1X^q+\gamma_2Y^q)=t'
		\end{array}\right.
	\end{equation}
	Subtracting the first equation from the second we get
	\begin{equation}\label{trc:2}
		\tr(\gamma_1^qX+\gamma_2^qY)=\frac{t+t'}{(b+b^q)},
	\end{equation}
	Since  $g$ is not the identity,
	$(\gamma_1,\gamma_2) \neq (0,0)$;  hence,
	Equation~\eqref{trc:2} is equivalent to the union of $q$ linear equations
	in $X,Y$ over $\GF{q^2}$. Thus, there are
	$q^{3}$ pairs $(x,y)$ satisfying~\eqref{trc:2}.
	For each such a pair, Equation~\eqref{orto2} has $q$ solutions in $Z$,
	corresponding to a coset  of $\GF{q}$ in $\GF{q^2}$ and only one of these $q$ solutions is in $C$. Therefore,
	System~\eqref{orto1} has $q^3$ solutions in $\cW_0$.
	
	Next, we show that $\cA_0$ does not contain any
	repeated row.
	Let us index its rows by the
	corresponding elements in $\cW_0$.
	Observe that the
	row
	$(x,y,z)$
	is the same as
	$(x_1,y_1,z_1)$
	in $\cA_0$
	if, and only if,
	\[ F^g(1,x,y,z)=F^g(1,x_1,y_1,z_1), \]
	for any $g\in \cR$.
	We thus obtain a system of $q^{4}$
	equations in the $6$ indeterminates $x_1,y_1,z_1,x,y,z$. Each equation is of the form
	\begin{equation}
		\begin{array}{l}
			\label{eq:scol}
			(z+z_1)^q+(z+z_1)+a^q((x+x_1)^{2q}+(y+y_1)^{2q}) \\
			+a((x+x_1)^2+(y+y_1)^2)+
			(b+b^q)((x+x_1)^{q+1}+(y+y_1)^{q+1})=         \\
			(b^q+b)(\gamma_2^q(y+y_1)+\gamma_2(y+y_1)^q+\gamma_1^q(x+x_1)+\gamma_1(x+x_1)^q)
		\end{array}
	\end{equation}
	where the elements $\gamma_i$ vary in $\GF{q^2}$ in
	all possible ways.
	In particular,
	for $\gamma_i=0$ we have that
	the left hand side of the equations of~\eqref{eq:scol} equals zero.
	Thus,
	\begin{equation}
		\label{eq:scol1}
		(b^q+b)(\gamma_2^q(y+y_1)+\gamma_2(y+y_1)^q+\gamma_1^q(x+x_1)+\gamma_1(x+x_1)^q) =0
	\end{equation}
	Choosing $\gamma_1=1$ and $\gamma_2=0$ it follows from~\eqref{eq:scol1} that $x$ and $x_1$ must be in the same coset of
	$\GF{q}$. If we choose $\gamma_1=0$ and $\gamma_2=1$ in~\eqref{eq:scol1} we get that $y$ and $y_1$ are as well in the same coset of
	$\GF{q}$; so $x+x_1,y+y_1\in\GF{q}$.
        So, \eqref{eq:scol} becomes
        \begin{equation}\label{cos1}
        \tr(z+z_1)+\tr(a+b)\left((x+x_1)+(y+y_1)\right)^2=0 
        \end{equation}
        and  \eqref{eq:scol1} becomes
          \begin{equation}\label{cos2}
        \tr(b)\left(\tr(\gamma_2)(y+y_1)+\tr(\gamma_1)(x+x_1)\right)=0.
        \end{equation}
        By assumption $\tr(b)\neq0$ and by the arbitrariness of $\gamma_1$ and $\gamma_2$     \eqref{cos2} gives $x=x_1$ as well as $y=y_1$.

    Then,  \eqref{cos1} implies $\tr(z+z_1)=0$
        that is, $z$ and $z_1$ are in the same coset of $\GF{q}$.
  Thus, there are
	no two distinct vectors in $\cW_0$ whose difference
	is of the required form; hence, $\cA_0$ does
	not contain repeated rows
	and the theorem follows.
      \end{proof}

\section*{Acknowledgments}
All of the authors thank the Italian National Group for Algebraic and Geometric Structures and their Applications (GNSAGA--INdAM) for its support to their research.
The work of A. Aguglia and V. Siconolfi has also been
 partially founded by the European Union under the Italian National Recovery and Resilience Plan (NRRP) of NextGenerationEU, partnership on “Telecommunications of the Future” (PE00000001 - program ‘‘RESTART’’, CUP: D93C22000910001) and by the Italian Ministry of University and Research under the Programme “Department of Excellence” Legge 232/2016 (Grant No. CUP - D93C23000100001).


\end{document}